\newtheorem{thm}{Theorem}[section]
\newtheorem{lem}{Lemma}[section]
\newtheorem{defn}{Definition}[section]
\newtheorem{rem}{Remark}[section]
\newtheorem{obs}{Observation}[section]
\DeclareMathOperator{\spec}{spec}
\title{On the construction of cospectral graphs for the adjacency and normalized Laplacian matrices}
\author{M. Rajesh Kannan\thanks{Department of Mathematics, Indian Institute of Technology Kharagpur, Kharagpur 721 302, India. Email: rajeshkannan@maths.iitkgp.ac.in, rajeshkannan1.m@gmail.com } \and Shivaramakrishna Pragada\thanks{Undergraduate Student, Department of Aerospace Engineering,  Indian Institute of Technology Kharagpur, Kharagpur 721 302, India. Email: shivaram@iitkgp.ac.in, shivaramkratos@gmail.com }
}
\date{\today}
\begin{document}
    \maketitle
    \baselineskip=0.25in
    \begin{abstract}
In [Steve Butler. A note about cospectral graphs for the adjacency and normalized Laplacian matrices. Linear Multilinear Algebra, 58(3-4):387-390, 2010.], Butler constructed a family of bipartite graphs, which are cospectral for both the adjacency and the normalized Laplacian matrices. In this article, we extend this construction for generating larger classes of bipartite graphs, which are cospectral for both the adjacency and the normalized Laplacian matrices. Also, we provide a couple of constructions of non-bipartite graphs, which are cospectral for the adjacency matrices but not necessarily for the normalized Laplacian matrices.
    \end{abstract}

    {\bf AMS Subject Classification(2010):} 05C50.

    \textbf{Keywords.} Adjacency matrix, Normalized Laplacian matrix, Cospectral graphs.
\section{Introduction} 

Spectral graph theory studies some of the properties of a graph by associating various types of matrices with it. The spectrum of the matrices associated with a graph determines some of the structural properties of that graph. However, not all features are revealed by the spectrum of the associated matrices. One such instance is that two non-isomorphic graphs can have the same spectrum. In this paper, we propose some methods which could be used to construct a pair of non-isomorphic graphs sharing the same spectrum.

All graphs considered in this paper are simple and undirected. Let $G=(V,E)$ be a graph with the vertex set $V=\{1,2,\ldots, n\}$ and the edge set $E$. If two vertices $i$ and $j$ of $G$ are adjacent, we denote it by $i\sim j$. The adjacency matrix $A=[a_{ij}]$ of a graph $G$, on $n$ vertices, is an $n \times n$ matrix defined by
$$a_{ij}=\begin{cases}
1,& if\ i\sim j,\\
0,& otherwise.
\end{cases}.$$ The spectrum, denoted by $\spec(G)$, of $G$ is the set of all eigenvalues of $A(G)$, with corresponding multiplicities. It has been a longstanding problem to characterize graphs that are determined by their spectrum \cite{vandam-haemers, devel-vandam-haemers}. For a graph $G$, if $\spec(H) = \spec(G)$ for any other graph $H$ implies $H$ is isomorphic to $G$, then $G$ is determined by its spectrum, or a DS graph for short, if it is not the case, then $G$ is not determined by its spectrum or NDS graph for short. In \cite{schw}, the author proved that almost all trees are cospectral. In \cite{god-mcka}, the authors provided a method for constructing NDS graphs. That is, constructing non-isomorphic graphs with the same spectrum. In \cite{vandam-haemers}, the authors conjectured that almost all graphs are DS. For more details, we refer to \cite{ brou-haem, cvec, god-mcka, ham-spen, vandam-haemers, devel-vandam-haemers}.

For each vertex $i$ of a graph $G$, let $d_i$ denote the degree of the vertex $i$. Let $D$ denote the diagonal matrix whose $(i,i)$-th entry is $d_i$. Then the matrix $L=D-A$ is called the Laplacian matrix of the graph $G$, and the matrix $\mathcal{L} = D^{-\frac{1}{2}}LD^{-\frac{1}{2}}$ is called the normalized Laplacian matrix of a graph $G$ without isolated vertices. For more details, we refer to \cite{chung1}. In \cite{but-jas}, the authors provided various constructions that give families of graphs which are cospectral for the normalized Laplacian matrices, while, in \cite{but-lama}, the author constructed families of bipartite graphs which are cospectral for both the adjacency and the normalized Laplacian matrices.

In this paper, we generalize a construction given in \cite{but-lama}, which will provide a construction of large classes of NDS bipartite graphs, which are cospectral for both the adjacency and the normalized Laplacian matrices. Thereupon, we extend this construction in a different way, which leads to a larger class of graphs that are cospectral for the adjacency matrices.

This article is organized as follows: In section 2, we collect necessary definitions and known results, which will be used later on in the paper. In section 3, we present the main results.

\section{Definitions, notions and known results }
In this section, we collect some of the known definitions and results. 
\begin{defn}
The direct sum of two matrices $A$ and $B$ is defined to be the block diagonal matrix $\begin{bmatrix}
A      & 0 \\
0       & B
\end{bmatrix},$ where the off-diagonal zero matrices are of appropriate order.  The direct sum of two matrices $A$ and $B$ is denoted by $A \oplus B$.
\end{defn}

\begin{defn}\cite{ben-gre}
For a given $n \times n$ matrix $A$, the unique $n \times n$ matrix satisfying $AXA=A,~XAX=X,~(AX)^{T}=AX$ and
$(XA)^{T}=XA$ is called the \emph{Moore-Penrose inverse} or the \emph{pseudo inverse} of $A$ and is denoted by $A^{\dagger}$.
\end{defn}
In \cite{but-lama}, the author established the following:
\begin{thm} \label{Butler-1}
     Let $p \geq q$, and let $B$ be a $p \times q$ matrix. If $C = \begin{bmatrix}
    0       & B & B \\
    B^T       & 0& 0 \\
    B^T       & 0&  0\\
    \end{bmatrix}$ ( $(2q+p) \times (2q+p)$ matrix) and $D = \begin{bmatrix}
    0       & B^T & B^T\\
    B       & 0& 0 \\
    B       & 0& 0 \\
    \end{bmatrix}$ ($(2p+q) \times (2p+q)$ matrix), then $C \oplus 0_{p-q}$ and $D$ are cospectral.
\end{thm}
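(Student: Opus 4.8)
The plan is to show that both $C \oplus 0_{p-q}$ and $D$ are real symmetric matrices of the same order $2p+q$ whose nonzero eigenvalues are governed, respectively, by the nonzero eigenvalues of $BB^T$ and of $B^TB$. Since these two products share the same nonzero spectrum (with multiplicities), the two matrices will share the same nonzero spectrum, and a dimension count will force the multiplicities of the eigenvalue $0$ to agree as well.

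First I would analyze $C$ directly through its eigenvector equation. Writing a candidate eigenvector of $C$ as $v = (x, y, z)$ with $x \in \mathbb{R}^p$ and $y, z \in \mathbb{R}^q$, the equation $Cv = \lambda v$ becomes the system $By + Bz = \lambda x$, $B^T x = \lambda y$, $B^T x = \lambda z$. For $\lambda \neq 0$ the last two equations force $y = z = \lambda^{-1} B^T x$, and substituting into the first yields $2 B B^T x = \lambda^2 x$. Conversely, every eigenvector $x$ of $BB^T$ with eigenvalue $\mu > 0$ produces, via $y = z = \lambda^{-1}B^T x$ with $\lambda = \pm\sqrt{2\mu}$, an eigenvector of $C$. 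The assignment $x \mapsto (x, \lambda^{-1}B^Tx, \lambda^{-1}B^Tx)$ is injective, so the multiplicity of each nonzero $\lambda$ as an eigenvalue of $C$ equals the multiplicity of $\mu = \lambda^2/2$ as an eigenvalue of $BB^T$. Hence the nonzero spectrum of $C$ is exactly $\{\pm\sqrt{2\mu}\}$ ranging over the nonzero eigenvalues $\mu$ of $BB^T$, counted with multiplicity. Adjoining $0_{p-q}$ only changes the multiplicity of $0$, so $C \oplus 0_{p-q}$ has the same nonzero spectrum.

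Next I would run the identical computation for $D$. With $v = (x,y,z)$, $x \in \mathbb{R}^q$, $y,z \in \mathbb{R}^p$, the equation $Dv = \lambda v$ gives $B^T y + B^T z = \lambda x$, $Bx = \lambda y$, $Bx = \lambda z$; for $\lambda \neq 0$ this collapses to $2 B^T B x = \lambda^2 x$, so the nonzero spectrum of $D$ is $\{\pm\sqrt{2\mu}\}$ over the nonzero eigenvalues $\mu$ of $B^T B$, again with multiplicities. The crucial observation is the standard fact that $BB^T$ and $B^TB$ have exactly the same nonzero eigenvalues with the same multiplicities; therefore $C \oplus 0_{p-q}$ and $D$ have identical nonzero spectra.

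Finally, both matrices are symmetric of order $2p+q$, hence diagonalizable with a full set of real eigenvalues counted with multiplicity. Since their nonzero eigenvalues coincide with multiplicity and their total sizes are equal, the multiplicity of the eigenvalue $0$ must coincide as well, completing the proof that $C \oplus 0_{p-q}$ and $D$ are cospectral. The only point requiring care is the bookkeeping of multiplicities: one must verify that the correspondences above are genuine bijections between eigenspaces, which follows from the injectivity of $x \mapsto (x, \lambda^{-1}B^Tx, \lambda^{-1}B^Tx)$ together with the fact that $x \neq 0$ whenever $\mu \neq 0$. Once the nonzero part is pinned down exactly, the zero eigenvalues take care of themselves by the dimension argument, so no separate computation of $\dim \ker C$ is strictly needed.
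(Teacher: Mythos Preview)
Your proof is correct. The paper does not prove this statement directly (it is quoted from Butler), but its proof of the $n$-copy generalization, Theorem~\ref{constr-cospec1.1}, specializes at $n=2$ to a closely related argument. The one genuine difference is in how the equality of nonzero spectra is established: you reduce the eigenvalue equations of $C$ and $D$ separately to $2BB^{T}x=\lambda^{2}x$ and $2B^{T}Bx=\lambda^{2}x$, and then invoke the standard fact that $BB^{T}$ and $B^{T}B$ share their nonzero eigenvalues with multiplicity; the paper instead lifts eigenvectors $(x,y)$ of the single matrix $\bigl[\begin{smallmatrix}0 & B\\ B^{T} & 0\end{smallmatrix}\bigr]$ to eigenvectors $(\sqrt{2}\,x,y,y)$ of $C$ and $(\sqrt{2}\,y,x,x)$ of $D$ with eigenvalue scaled by $\sqrt{2}$, so that both nonzero spectra are identified with a common object and no separate $BB^{T}/B^{T}B$ lemma is needed. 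Your route is more explicit about the multiplicity bijection, while the paper's formulation extends more transparently to $n$ copies and to the normalized Laplacian computation that follows.
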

\begin{defn}\cite{but-lama}
For a non-negative symmetric matrix $M$ with positive row sums, the normalized Laplacian of the matrix $M$, denoted by $L(M)$,  is defined as $L(M) = I - D^{-\frac{1}{2}}MD^{-\frac{1}{2}}$ where $D$ is the diagonal matrix with diagonal entries composed of the row sums of $M$.\end{defn}

    In case that the non-negative symmetric matrix $M$ has zero row sums, then we use the pseudo inverse of $D$ to define the normalized Laplacian matrix of $M$.

In the case of the adjacency matrix corresponding to a graph, zero row sums correspond to the isolated vertices in the graph. So $A$, the adjacency matrix of the graph with zero row sums,  could be decomposed as $A$ = $A_{1} \oplus 0_{k}$, where $A_{1}$ is the adjacency matrix of all components which do not contain isolated vertices, and $k$ is the number of isolated vertices. In this case, we define the normalized Laplacian as follows: $L(A) =L(A_{1} \oplus 0_{k}) = L(A_{1}) \oplus I_{k}$, where $I_{k}$ denotes the $k \times k$ identity matrix and $0_{k}$ denotes the $k \times k$ zero matrix.

\begin{thm} \cite{but-lama}\label{butler-2} Let $L(M)$ denote the normalized Laplacian of a non-negative symmetric matrix $M$ with positive row sums, then we have the following:
\begin{enumerate}
  \item   A vector $x$ is an eigenvector of $L(M)$ if and only if $y = D^{-\frac{1}{2}}x$ (known as the harmonic eigenvector) satisfies $(D-M)y = \lambda Dy.$
\item If $M$ has $0$ as an eigenvalue with multiplicity $q$, then $L(M)$ has $1$ as an eigenvalue   with multiplicity at least $q$.

\end{enumerate}
\end{thm}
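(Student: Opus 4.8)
The plan is to treat the two parts separately, with Part 1 reducing to a direct algebraic substitution and Part 2 following as an immediate consequence.

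For Part 1, I would start from the definition $L(M) = I - D^{-\frac{1}{2}} M D^{-\frac{1}{2}}$ and write the eigenvalue equation $L(M)x = \lambda x$ explicitly as $x - D^{-\frac{1}{2}} M D^{-\frac{1}{2}} x = \lambda x$. Substituting $x = D^{\frac{1}{2}} y$ (equivalently $y = D^{-\frac{1}{2}} x$) and simplifying the middle term yields $D^{\frac{1}{2}} y - D^{-\frac{1}{2}} M y = \lambda D^{\frac{1}{2}} y$. Left-multiplying throughout by $D^{\frac{1}{2}}$ then gives $Dy - My = \lambda D y$, that is, $(D - M)y = \lambda D y$. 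Since $M$ has positive row sums, $D$ is a positive diagonal matrix, so $D^{\frac{1}{2}}$ and $D^{-\frac{1}{2}}$ are well defined and invertible; hence each step above is reversible, and the correspondence $x \leftrightarrow y$ is a bijection. This establishes the equivalence in both directions at once.

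For Part 2, I would specialize the computation of Part 1 to $\lambda = 1$. Setting $\lambda = 1$ in $L(M)x = x$ gives $D^{-\frac{1}{2}} M D^{-\frac{1}{2}} x = 0$, and because $D^{-\frac{1}{2}}$ is invertible this is equivalent to $M D^{-\frac{1}{2}} x = 0$, i.e.\ $D^{-\frac{1}{2}} x$ lies in the kernel of $M$. Thus the eigenspace of $L(M)$ associated with the eigenvalue $1$ is precisely the image $D^{\frac{1}{2}}(\ker M)$. Since $M$ is symmetric, the multiplicity of $0$ as an eigenvalue of $M$ equals $\dim \ker M = q$, and applying the invertible map $D^{\frac{1}{2}}$ preserves dimension, so the eigenspace for $1$ has dimension exactly $q$; in particular, the multiplicity of $1$ as an eigenvalue of $L(M)$ is at least $q$.

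The computations here are entirely routine, and there is no genuine obstacle. The only point requiring care is to record explicitly that the positivity of the row sums of $M$ guarantees that $D$ is invertible, which is what makes $D^{\frac{1}{2}}$ a legitimate change of variables and ensures that both the eigenvector correspondence in Part 1 and the dimension count in Part 2 go through without loss.
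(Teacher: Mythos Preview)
Your argument is correct and entirely standard; in fact, for Part~2 you actually show that the multiplicity of $1$ equals $q$ exactly, which is slightly stronger than the stated ``at least $q$.'' There is nothing to compare against, however: the paper does not supply its own proof of this theorem but simply quotes it from \cite{but-lama} as a known tool, so any correct proof would have been acceptable here.
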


%
%
%

\begin{thm}[{\cite[Lemma 1.1]{but-lama}}]
 Let $p \geq q$, and let $B$ be a $p \times q$ matrix. If $C = \begin{bmatrix}
    0       & B & B \\
    B^T       & 0& 0 \\
    B^T       & 0&  0\\
    \end{bmatrix}$ ( $(2q+p) \times (2q+p)$ matrix) and $D = \begin{bmatrix}
    0       & B^T & B^T\\
    B       & 0& 0 \\
    B       & 0& 0 \\
    \end{bmatrix}$ ($(2p+q) \times (2p+q)$ matrix), then $L(D)$ and $L(C \oplus 0_{p-q})$ are co-spectral. 
\end{thm}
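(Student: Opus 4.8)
The plan is to pass from the normalized Laplacians to the associated normalized adjacency matrices and then invoke Theorem~\ref{Butler-1}. Write $\mathcal{A}(M) = \Delta^{-1/2} M \Delta^{-1/2}$ for the normalized adjacency of a nonnegative symmetric matrix $M$, where $\Delta$ is the diagonal matrix of row sums of $M$, so that $L(M) = I - \mathcal{A}(M)$ and $\spec(L(M)) = \{\,1-\mu : \mu \in \spec(\mathcal{A}(M))\,\}$. Under this correspondence, cospectrality of $L(D)$ and $L(C \oplus 0_{p-q})$ is equivalent to cospectrality of the corresponding normalized adjacency matrices, after accounting for the isolated vertices: by the convention fixed above, $L(C \oplus 0_{p-q}) = L(C) \oplus I_{p-q}$, whose normalized-adjacency counterpart is $\mathcal{A}(C) \oplus 0_{p-q}$ (each isolated vertex contributes a $1$ to $\spec(L)$, i.e.\ a $0$ to $\spec(\mathcal{A})$). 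Thus it suffices to prove that $\mathcal{A}(C) \oplus 0_{p-q}$ and $\mathcal{A}(D)$ are cospectral. I assume here that $B$ has no zero row and no zero column, so that $C$ and $D$ have positive row sums and $\mathcal{A}$ is defined, and the only isolated vertices are the explicit summand $0_{p-q}$.

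Next I would compute the row-sum diagonals explicitly in block form. Let $R$ and $S$ be the diagonal matrices of the row sums and column sums of $B$, respectively. Reading off row sums block by block, the top block of $C$ (of size $p$) has degrees $2R$ because $B$ appears there twice, while the two lower blocks have degrees $S$; hence $\Delta_C = 2R \oplus S \oplus S$. Symmetrically, $\Delta_D = 2S \oplus R \oplus R$. Conjugating by $\Delta_C^{-1/2}$ and $\Delta_D^{-1/2}$ and setting $\tilde B = \tfrac{1}{\sqrt 2}\, R^{-1/2} B\, S^{-1/2}$, a direct block computation gives
\[
\mathcal{A}(C) = \begin{bmatrix} 0 & \tilde B & \tilde B \\ \tilde B^{T} & 0 & 0 \\ \tilde B^{T} & 0 & 0 \end{bmatrix}, \qquad \mathcal{A}(D) = \begin{bmatrix} 0 & \tilde B^{T} & \tilde B^{T} \\ \tilde B & 0 & 0 \\ \tilde B & 0 & 0 \end{bmatrix}.
\]

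The decisive point is that both matrices have exactly the shape appearing in Theorem~\ref{Butler-1}, with the single matrix $B$ there replaced by the common $p \times q$ block $\tilde B$. Since $p \ge q$, Theorem~\ref{Butler-1} applied to $\tilde B$ yields at once that $\mathcal{A}(C) \oplus 0_{p-q}$ and $\mathcal{A}(D)$ are cospectral. Translating back through $L = I - \mathcal{A}$ and matching the isolated-vertex contributions (the $\{1^{(p-q)}\}$ on the $L$ side corresponding to $\{0^{(p-q)}\}$ on the $\mathcal{A}$ side) gives $\spec(L(C \oplus 0_{p-q})) = \spec(L(D))$, as required.

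The only step needing genuine care is the block computation of $\mathcal{A}(C)$ and $\mathcal{A}(D)$, and in particular checking that the factor $\tfrac{1}{\sqrt 2}$ emerges uniformly. This is precisely where the doubled degree $2R$ (resp.\ $2S$) on the top block meets the single degrees $S$ (resp.\ $R$) on the other blocks: in $\mathcal{A}(C)$ the off-diagonal block is $(2R)^{-1/2} B\, S^{-1/2} = \tfrac{1}{\sqrt2} R^{-1/2} B\, S^{-1/2} = \tilde B$, while in $\mathcal{A}(D)$ it is $R^{-1/2} B\,(2S)^{-1/2} = \tilde B$ as well, so the same symmetric normalized block governs both. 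Once this is verified, the remainder is bookkeeping, and the statement reduces cleanly to the already-established adjacency version, Theorem~\ref{Butler-1}.
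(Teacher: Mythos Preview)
Your argument is correct, and it takes a genuinely different route from the one the paper uses (in its proof of the generalization, Theorem~\ref{NLC&D}, specialized to $n=2$). The paper works directly with harmonic eigenvectors: it invokes Theorem~\ref{butler-2} to pin down the multiplicity of the eigenvalue $1$, and then lifts each harmonic eigenvector $\begin{bmatrix} x \\ y \end{bmatrix}$ of $L\!\left(\begin{bmatrix} 0 & B \\ B^T & 0 \end{bmatrix}\right)$ to explicit harmonic eigenvectors of $L(C)$ and $L(D)$ with the same eigenvalue, thereby matching the remaining spectrum. Your approach instead normalizes first: you observe that $\mathcal{A}(C)$ and $\mathcal{A}(D)$ are themselves of the exact block form appearing in Theorem~\ref{Butler-1}, with the single normalized block $\tilde{B} = \tfrac{1}{\sqrt{2}}\,R^{-1/2} B\, S^{-1/2}$, and then apply the adjacency result as a black box. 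This is more economical and conceptually pleasing, since it explains \emph{why} the normalized-Laplacian statement is a formal consequence of the adjacency statement rather than reproving it in parallel; the paper's approach, on the other hand, yields explicit (harmonic) eigenvectors and extends verbatim to the $n$-block generalization. Your mild hypothesis that $B$ have no zero row or column is exactly what is needed for $R$ and $S$ to be invertible, and the paper's computation implicitly uses the same assumption when it writes $D_1, D_2$ as degree diagonals.
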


\section{Main results}

The following observation  will be useful for the rest of the article. 
\begin{obs}\label{key-lemma}
    Let  $A = \begin{bmatrix}
    G'  & B  & B\\
    B^T  & 0 & G \\
    B^T  & G & 0 \\
    \end{bmatrix}$,  where $B$ is a $p \times q$ matrix, and $G$ and $G'$ are symmetric matrices of appropriate orders. If $\begin{bmatrix}
        x      \\
        y        \\
        z \\
        \end{bmatrix}$ is an eigenvector of $A$ with corresponding to an eigenvalue $\lambda$, then
        $$\begin{bmatrix}
        G'  & B  & B\\
        B^T  & 0 & G \\
        B^T  & G & 0 \\
        \end{bmatrix}\begin{bmatrix}
        x      \\
        y        \\
        z \\
        \end{bmatrix} =\begin{bmatrix}
        G'x + By+Bz      \\
        B^Tx + Gz        \\
        B^Tx + Gy \\
        \end{bmatrix}=\lambda \begin{bmatrix}
        x      \\
        y        \\
        z \\
        \end{bmatrix}.$$ By comparing the second and third components, we get  $$G(z-y) = -\lambda(z-y).$$  Thus, either $-\lambda$ is an eigenvalue of the matrix $G$ or the vectors  $y$ and $z$ are the same.
  \end{obs}
\subsection{Construction I}
Let $I$ and $0$ denote the identity matrix and the zero matrix of appropriate order, respectively. 
\begin{thm} \label{constr-cospec1.1}
Let $B$ be a $p \times q$ matrix where $p \geq q$. Let $C = \begin{bmatrix}
0       & B & B &\hdots & B\\
B^T       & 0& 0&\hdots  & 0  \\
B^T       & 0& 0&\hdots  & 0  \\
\vdots & \vdots&  \vdots&  \ddots & \vdots\\
B^T       & 0& 0&\hdots  & 0
\end{bmatrix}$ be an  $(nq+p) \times (nq+p)$ matrix, and let $D=  \begin{bmatrix}
0       & B^T & B^T &\hdots & B^T\\
B       & 0& 0&\hdots  & 0  \\
B       & 0& 0&\hdots  & 0  \\
\vdots & \vdots&  \vdots&  \ddots & \vdots\\
B       & 0& 0&\hdots  & 0  \\
\end{bmatrix}$ be an $(np+q) \times (np+q)$ matrix. Then the matrices  $D$ and $C \oplus 0_{(n-1)(p-q)}$ are cospectral.
\end{thm}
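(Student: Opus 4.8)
The plan is to analyze the eigenvectors of $C$ and $D$ in their natural block form, mirroring Observation~\ref{key-lemma} with $G=G'=0$. Partition an eigenvector of $C$ for eigenvalue $\lambda$ as $(x; y_1, \dots, y_n)$ with $x \in \mathbb{R}^p$ and each $y_i \in \mathbb{R}^q$. The eigenvalue equations read $B(y_1 + \cdots + y_n) = \lambda x$ together with $B^T x = \lambda y_i$ for every $i$. First I would observe that if $\lambda \neq 0$ then $x = 0$ forces every $y_i = 0$, so every nonzero eigenvalue has $x \neq 0$; moreover the relations $B^T x = \lambda y_i$ show all the $y_i$ coincide (the analogue of the ``$y=z$'' alternative in Observation~\ref{key-lemma}, since here $-\lambda$ cannot be an eigenvalue of the zero block). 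Substituting $y_i = \tfrac{1}{\lambda}B^T x$ into the first equation yields $n\,BB^T x = \lambda^2 x$. The same computation applied to $D$, with an eigenvector $(u; v_1, \dots, v_n)$, gives $n\,B^T B\, u = \lambda^2 u$.

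Next I would convert this into an exact statement about spectra. Writing a singular value decomposition $B = U \Sigma V^T$, the matrices $BB^T$ and $B^T B$ have identical nonzero eigenvalues with identical multiplicities, namely the squared singular values of $B$. Hence a real number $\lambda \neq 0$ is an eigenvalue of $C$ if and only if $\lambda^2/n$ is a nonzero eigenvalue of $BB^T$, which holds if and only if $\lambda^2/n$ is a nonzero eigenvalue of $B^T B$, i.e. if and only if $\lambda$ is an eigenvalue of $D$. To get multiplicities I would check that the assignment $x \mapsto (x; \tfrac{1}{\lambda}B^T x, \dots, \tfrac{1}{\lambda}B^T x)$ is a linear isomorphism from the $(\lambda^2/n)$-eigenspace of $BB^T$ onto the $\lambda$-eigenspace of $C$ (injectivity is immediate since the first block recovers $x$, and surjectivity follows because every $\lambda$-eigenvector has this form). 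The same map works for $D$ via $B^T B$, so the nonzero parts of $\spec(C)$ and $\spec(D)$ agree with multiplicities.

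Finally I would settle the zero eigenvalue by a dimension count, which is where the bookkeeping must be done carefully. If $B$ has rank $r$, then by the above $C$ and $D$ each have exactly $2r$ nonzero eigenvalues (counted with multiplicity). The size identity $nq + p + (n-1)(p-q) = np + q$ shows that $C \oplus 0_{(n-1)(p-q)}$ and $D$ have the same order $np+q$, so the multiplicity of $0$ in $C \oplus 0_{(n-1)(p-q)}$ is $(np+q) - 2r$, which is precisely the multiplicity of $0$ in $D$. Combining this with the matched nonzero spectra gives $\spec\big(C \oplus 0_{(n-1)(p-q)}\big) = \spec(D)$. The main obstacle I anticipate is not the eigenvector computation, which is routine, but the rigorous multiplicity accounting: verifying that no nonzero eigenvector is lost to the $x=0$ case, that the eigenspace maps are genuine isomorphisms rather than mere containments, and that the padding $0_{(n-1)(p-q)}$ supplies exactly the right number of extra zero eigenvalues through the size identity.
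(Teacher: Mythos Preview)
Your argument is correct and follows essentially the same blueprint as the paper: reduce the nonzero spectrum of $C$ and of $D$ to a common smaller object, then match the zero eigenvalues by size. The paper phrases the reduction via the $2\times 2$ block $\begin{bmatrix}0 & B\\ B^T & 0\end{bmatrix}$, mapping an eigenpair $(\lambda,(x,y))$ of that matrix to the eigenpair $(\sqrt{n}\,\lambda,(\sqrt{n}\,x,y,\dots,y))$ of $C$, and symmetrically for $D$; you instead eliminate the $y_i$'s and land on $nBB^Tx=\lambda^2 x$ versus $nB^TBu=\lambda^2 u$, invoking the standard fact that $BB^T$ and $B^TB$ share nonzero spectrum. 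These are the same computation in slightly different dress, since for $\lambda\neq 0$ an eigenvector $(x,y)$ of $\begin{bmatrix}0 & B\\ B^T & 0\end{bmatrix}$ satisfies $y=\lambda^{-1}B^Tx$ and $BB^Tx=\lambda^2 x$. What your write-up adds over the paper's is a genuine accounting of multiplicities: the paper exhibits eigenvectors and asserts the resulting list is complete, whereas you explicitly check that $x\mapsto (x,\lambda^{-1}B^Tx,\dots,\lambda^{-1}B^Tx)$ is a linear isomorphism onto the full $\lambda$-eigenspace and then close the argument with the size identity $nq+p+(n-1)(p-q)=np+q$ and a rank count. That extra care is worth keeping.
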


\begin{proof}
The matrix $C$ has $0$ as an eigenvalue with multiplicity at least $(n-1)q$.

Let $\lambda$ be an eigenvalue of $\begin{bmatrix}
0      & B \\
B^T       & 0
\end{bmatrix}$, and $\begin{bmatrix}
x \\
y
\end{bmatrix}$ be an eigenvector corresponding to $\lambda$.
Then,  $B^Tx = \lambda y$ and $By = \lambda x$.
By a generalisation of observation \ref{key-lemma} we have, $$\begin{bmatrix}
0       & B & B &\hdots & B\\
B^T       & 0& 0&\hdots  & 0  \\
B^T       & 0& 0&\hdots  & 0  \\
\vdots & \vdots&  \vdots&  \ddots & \vdots\\
B^T       & 0& 0&\hdots  & 0  \\
\end{bmatrix}\begin{bmatrix}
\sqrt n x       \\
y        \\
y \\
\vdots\\
y  \\
\end{bmatrix} = \begin{bmatrix}
 nBy       \\
\sqrt nB^Tx        \\
\sqrt nB^Tx        \\
\vdots \\
\sqrt nB^Tx  \\
\end{bmatrix} = \lambda \sqrt n \begin{bmatrix}
\sqrt n x       \\
y        \\
y \\
\vdots \\
y  \\
\end{bmatrix}. $$
Hence, the  eigenvalues of $C$ are  $\lambda\sqrt n$ and $0$ with multiplicity $(n-1)q$, where $\lambda$ is an eigenvalue of the matrix $\begin{bmatrix}
0      & B \\
B^T       & 0 \\
\end{bmatrix}$.

It is easy to see that, the matrix $D$ has eigenvalue $0$ with multiplicity at least $(n-1)p$, and the  rest of the eigenvalues  of $D$ are $\lambda \sqrt{n}$, where $\lambda$ is an eigenvalue of the matrix $\begin{bmatrix}
0      & B \\
B^T       & 0 \\
\end{bmatrix}.$ For, $$\begin{bmatrix}
0       & B^T & B^T &\hdots & B^T\\
B       & 0& 0&\hdots  & 0  \\
B       & 0& 0&\hdots  & 0  \\
\vdots & \vdots&  \vdots&  \ddots & \vdots\\
B       & 0& 0&\hdots  & 0  \\
\end{bmatrix}\begin{bmatrix}
\sqrt n y       \\
x        \\
x \\
\vdots\\
x  \\
\end{bmatrix} = \begin{bmatrix}
nB^Tx       \\
\sqrt nBy        \\
\sqrt nBy        \\
\vdots \\
\sqrt nBy  \\
\end{bmatrix} = \lambda \sqrt n \begin{bmatrix}
\sqrt n y       \\
x        \\
x \\
\vdots \\
x  \\
\end{bmatrix}. $$

So the matrices $C$ and $D$ have the same set of non-zero eigenvalues (including multiplicity). If $p = q$, then $C$ and $D$ are cospectral. If $p\neq q$ and  $p > q$, then the matrices $D$ and $C \oplus 0_{(n-1)(p-q)}$ are cospectral.
\end{proof}

In the next theorem, we establish that the normalized Laplacians of the matrices $D$ and $C \oplus{0}_{(n-1)(p-q)}$ are cospectral.
\begin{thm} \label{NLC&D}
 Let  $B, C$, and $D$  be  three matrices defined as in Theorem \ref{constr-cospec1.1}. Then the matrices,   $L(C \oplus 0_{(n-1)(p-q)})$ and $L(D)$ are cospectral. 
\end{thm}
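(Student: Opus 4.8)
The plan is to mirror the eigenvector/eigenvalue bookkeeping of Theorem \ref{constr-cospec1.1}, but now at the level of the normalized Laplacian, using the harmonic eigenvector characterization from Theorem \ref{butler-2}. First I would record the degree (row-sum) structure of both matrices. For $C$, the first block of $p$ rows has row sum $nd$ in the sense that each of those rows sums to the total of $n$ copies of a row of $B$, while each of the $nq$ rows in the lower blocks has row sum equal to the corresponding row sum of $B^T$; for $D$ the roles of $B$ and $B^T$ (and of $p$ and $q$) are interchanged. So the diagonal degree matrices $D_C$ and $D_D$ are themselves built from the row sums $r$ of $B$ and the column sums $c$ of $B$ (equivalently the row sums of $B^T$), just repeated $n$ times in the appropriate blocks.

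Next I would exploit part (1) of Theorem \ref{butler-2}: a vector is an eigenvector of $L(M)$ with eigenvalue $\mu$ iff its harmonic lift $y$ satisfies $(D_M - M)y = \mu D_M y$. Equivalently, $\mu$ is an eigenvalue of $L(M)$ iff $1-\mu$ is an eigenvalue of $D_M^{-1/2} M D_M^{-1/2}$. So it suffices to show that $D_C^{-1/2}(C\oplus 0_{(n-1)(p-q)})D_C^{-1/2}$ and $D_D^{-1/2} D\, D_D^{-1/2}$ have the same spectrum. The strategy is then the same rank-one-style collapsing trick used in the adjacency proof: starting from an eigenvector $\begin{bmatrix} x \\ y \end{bmatrix}$ of the normalized two-block matrix associated to $B$, I would form the lifted vector $\begin{bmatrix} \sqrt{n}\,x \\ y \\ \vdots \\ y \end{bmatrix}$ (with the degree normalization folded in) and verify by direct block multiplication that it is an eigenvector of $D_C^{-1/2} C D_C^{-1/2}$ with the same eigenvalue, the factor $\sqrt{n}$ appearing exactly as in the unnormalized computation. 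The key point is that the degree scaling is \emph{uniform} across the $n$ repeated blocks, so $D_C^{-1/2}$ acts as the same scalar on each copy of $y$ and does not disturb the collapse; the $n$ copies of $B^T x$ add coherently to give the $\sqrt{n}$ factor, just as before.

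Having matched the nonzero part of the spectrum, I would then account for the eigenvalue $1$ coming from the kernels. By part (2) of Theorem \ref{butler-2}, since $C$ has $0$ as an eigenvalue with multiplicity at least $(n-1)q$ and $D$ has $0$ with multiplicity at least $(n-1)p$, the normalized Laplacians $L(C)$ and $L(D)$ carry eigenvalue $1$ with the corresponding multiplicities. The direct summand $0_{(n-1)(p-q)}$ contributes, by the paper's convention $L(A_1 \oplus 0_k) = L(A_1) \oplus I_k$, exactly $(n-1)(p-q)$ further eigenvalues equal to $1$ on the $C$-side. Tallying: on the $C$-side I get $(n-1)q + (n-1)(p-q) = (n-1)p$ eigenvalues equal to $1$ from the kernels/isolated part, matching the $(n-1)p$ on the $D$-side, and the remaining eigenvalues coincide through the collapsing argument. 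A small dimension count confirms both matrices have the same total size $np+q$ after adding the isolated vertices, so no eigenvalues are left unaccounted for.

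The main obstacle I anticipate is bookkeeping the eigenvalue $1$ correctly: one must be careful to distinguish the multiplicity of $0$ as an eigenvalue of the \emph{unnormalized} matrix (which lifts to eigenvalue $1$ of $L$) from the eigenvalue-$1$ contribution of the genuinely isolated vertices handled by the $\oplus I_k$ convention, and to ensure these are not double-counted. The cleanest way to avoid this is to argue at the level of $I - L(M)$, i.e.\ to show that $D_C^{-1/2} C D_C^{-1/2}$ and $D_D^{-1/2} D D_D^{-1/2}$ (augmented by the appropriate zero block and identity block for the isolated vertices) are similar to a common normalized two-block operator scaled by $\sqrt{n}$ on its nonzero spectrum, whence the eigenvalue-$1$ multiplicities are forced to agree by the matching of all other eigenvalues together with equal matrix dimensions. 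I would finish by invoking Theorem \ref{butler-2}(1) once more to transfer the conclusion back from $I - L$ to $L$ itself.
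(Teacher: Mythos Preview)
Your proposal is correct and follows essentially the same route as the paper: use the harmonic eigenvector characterization from Theorem \ref{butler-2}(1), lift eigenvectors of the two-block normalized matrix $L\!\begin{pmatrix}0 & B\\ B^T & 0\end{pmatrix}$ to block-constant eigenvectors of $L(C)$ and $L(D)$, and then match the eigenvalue-$1$ multiplicities via Theorem \ref{butler-2}(2) together with the convention $L(A_1\oplus 0_k)=L(A_1)\oplus I_k$. The only cosmetic difference is that the paper works directly with the generalized eigenvalue equation $(D_M-M)y=\lambda D_M y$ and uses the unscaled lift $(x,y,\dots,y)^T$ (the factor $n$ being absorbed into the top block $nD_1$ of the degree matrix), whereas you phrase things in terms of $D_M^{-1/2}MD_M^{-1/2}$ and carry the $\sqrt{n}$ explicitly; these are equivalent and your tally of the eigenvalue-$1$ contributions, $(n-1)q+(n-1)(p-q)=(n-1)p$, is exactly the count the paper relies on.
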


\begin{proof}
As the matrices $D$ and $C\oplus 0_{(n-1)(p-q)}$ have $0$ with multiplicity $(n-1)p$,  by Theorem \ref{butler-2}, the spectrum of both the matrices $L(D)$ and $L(C \oplus 0_{(n-1)(p-q)})$ have  $1$ as an eigenvalue with multiplicity $(n-1)p$.
    
    Now, the rest of the eigenvalues are constructed using the eigenvectors of $L\Bigg(\begin{bmatrix}
    0      & B \\
    B^T       & 0
    \end{bmatrix}\Bigg)$. Now, 

    $$ \Bigg(\begin{bmatrix}
    D_{1}      & 0 \\
    0      & D_{2}
    \end{bmatrix} - \begin{bmatrix}
    0      & B \\
    B^T       & 0
    \end{bmatrix}\Bigg)\begin{bmatrix}
    x \\
    y
    \end{bmatrix} = \begin{bmatrix}
    D_{1}      & -B \\
    -B^T      & D_{2}
    \end{bmatrix}\begin{bmatrix}
    x \\
    y
    \end{bmatrix} = \begin{bmatrix}
    D_{1}x - By \\
    D_{2}y - B^Tx
    \end{bmatrix} =  \lambda \begin{bmatrix}
    D_{1}x \\
    D_{2}y
    \end{bmatrix}. $$

    So the rest of the eigenvalues of $L(C)$ are given by
 $$\begin{bmatrix}
    nD_{1}       & -B & -B &\hdots & -B\\
    -B^T       & D_{2}& 0&\hdots  & 0  \\
    -B^T       & 0& D_{2}&\hdots  & 0  \\
    \vdots & \vdots&  \vdots&  \ddots & \vdots\\
    -B^T       & 0& 0&\hdots  & D_{2}  \\
    \end{bmatrix}\begin{bmatrix}
    x       \\
    y        \\
    y \\
    \vdots\\
    y  \\
    \end{bmatrix} = \begin{bmatrix}
    nD_{1}x-nBy       \\
    D_{2}y - B^Tx        \\
    D_{2}y - B^Tx        \\
    \vdots \\
    D_{2}y - B^Tx \\
    \end{bmatrix} = \lambda \begin{bmatrix}
    nD_{1} x       \\
    D_{2}y        \\
    D_{2}y \\
    \vdots \\
    D_{2}y  \\
    \end{bmatrix}.$$
    
  Since  $L(C \oplus 0_{(n-1)(p-q)}) = L(C) \oplus I_{(n-1)(p-q)}$, thus $L(C\oplus 0_{(n-1)(p-q)})$ has $1$ as an eigenvalue with multiplicity $(n-1)p$. Thus the spectrum of $L(C \oplus 0_{(n-1)(p-q)} )$ is $L\Bigg(\begin{bmatrix}
    0      & B \\
    B^T       & 0
    \end{bmatrix}\Bigg) \cup \{\underbrace{1,\dots,1}_\text{$((n-1)p)$-times} \}$.
  The rest of the eigenvalues  of $L(D)$ are given by  $$\begin{bmatrix}
    nD_{2}       & -B^T & -B^T &\hdots & -B^T\\
    -B       & D_{1}& 0& \hdots  & 0  \\
    -B       & 0& D_{1}& \hdots  & 0  \\
    \vdots & \vdots&  \vdots&  \ddots & \vdots\\
    -B       & 0& 0& \hdots  & D_{1}  \\
    \end{bmatrix}\begin{bmatrix}
    y       \\
    x        \\
    x \\
    \vdots\\
    x  \\
    \end{bmatrix} = \begin{bmatrix}
    nD_{2}y-nB^Tx       \\
    D_{1}x - By        \\
    D_{1}x - By        \\
    \vdots \\
    D_{1}x - By \\
    \end{bmatrix} = \lambda \begin{bmatrix}
    nD_{2} y       \\
    D_{1}x        \\
    D_{1}x \\
    \vdots \\
    D_{1}x  \\
    \end{bmatrix}.$$
  Hence, $L(C \oplus 0_{(n-1)(p-q)})$ and $L(D)$ are cospectral.
\end{proof}

\begin{rem}
	In Theorem \ref{constr-cospec1.1}, if the entries of the matrix $B$ are either $0$ or $1$, $p \geq q$ and choose $B$ such that maximum row sum of $B$ is different from maximum column sum of $B$, then the graphs associated with  $D$ and $C \oplus 0_{(n-1)(p-q)}$ as  adjacency matrices, respectively, are cospectral. Also, by Theorem \ref{NLC&D},  the normalized Laplacian matrices corresponding to the adjacency matrices are cospectral. But these graphs not isomorphic.
\end{rem}

Next, we illustrate the above construction with a couple of examples. 
 \begin{enumerate}
        \item For $p = 3$, $q = 2$ and $n = 3$,  let us generate a pair of non-isomorphic, but cospectral graphs with respect to both the adjacency and the normalized Laplacian matrices.
       Let $B =\begin{bmatrix}
        1    &  0 \\
         1      & 1  \\
           1    &   1  \\
        \end{bmatrix}.$
       In the graphs illustrated below, vertex labeled $\{1, 2, \dots, n\}$ corresponds to graph whose  adjacency matrix is $D$, and vertex labeled $\{ 1', 2', \dots, n'\}$  corresponds to graph whose  adjacency matrix is $C \oplus 0_2$.
             \begin{center}
        	\begin{tikzpicture}
        	[scale=.8,auto=left,every node/.style={circle,fill=blue!20}]
        	\node (n4) at (2,8)  {4};
        	\node (n1) at (5,10) {1};
        	\node (n2) at (5,6)  {2};
        	\node (n3) at (6,11)  {3};
        	\node (n5) at (4,8) {5};
        	\node (n6) at (5,11) {6};
        	\node (n7) at (8,8) {7};
        	\node (n8) at (3,8) {8};
        	\node (n9) at (4,11) {9};
        	\node (n10) at (7,8) {10};
        	\node (n11) at (6,8) {11};
        	
        	\node (n4') at (12,9)  {4$'$};
        	\node (n1') at (12,11) {1$'$};
        	\node (n2') at (14,7)  {2$'$};
        	\node (n3') at (10,7)  {3$'$};
        	\node (n9') at (12,5) {9$'$};
        	\node (n6') at (14,9) {6$'$};
        	\node (n7') at (12,6) {7$'$};
        	\node (n8') at (10,9) {8$'$};
        	\node (n5') at (12,7) {5$'$};
        	\node (n10') at (15.5,9) {10$'$};
        	\node (n11') at (15.5,7) {11$'$};
        	
        	\foreach \from/\to in {n1/n8,n1/n9,n1/n3,n1/n6,n1/n7,n1/n10,n1/n4,n1/n11,n1/n5,n2/n7,n2/n5,n2/n8,n2/n10,n2/n4,n2/n11,n1'/n6',n1'/n8',n1'/n4',n4'/n3',n4'/n2',n6'/n2',n6'/n3',n9'/n2',n9'/n3',n7'/n2',n7'/n3',n5'/n2',n5'/n3',n8'/n2',n8'/n3'}
        	\draw (\from) -- (\to);
        	
        	\end{tikzpicture}
              \end{center}

\item For $p =  q = n = 3$, let us generate a pair of non-isomorphic, but cospectral graphs with respect to both the adjacency and the normalized Laplacian matrices. 
Let  $B = \begin{bmatrix}
1    &  0  & 1\\
1      & 1 & 0 \\
1    &   1 & 0 \\
\end{bmatrix}.$
 In the graphs illustrated below, vertex labeled$\{1, 2, \dots, n\}$ corresponds to graph whose  adjacency matrix is $D$, and vertex labeled$\{ 1', 2', \dots, n'\}$  corresponds to graph whose  adjacency matrix is $C$.

\begin{center}

	\begin{tikzpicture}
	[scale=.8,auto=left,every node/.style={circle,fill=blue!20}]
	\node (n6) at (5,8.5)  {6};
	\node (n1) at (8,10.5) {1};
	\node (n2) at (8,6.5)  {2};
	\node (n3) at (8,12.5)  {3};
	\node (n5) at (7,8.5) {5};
	\node (n10) at (8,11.5) {10};
	\node (n9) at (11,8.5) {9};
	\node (n8) at (6,8.5) {8};
	\node (n7) at (7,11.5) {7};
	\node (n12) at (10,8.5) {12};
	\node (n11) at (9,8.5) {11};
	\node (n4) at (9,11.5) {4};
	
	\node (n6') at (15,12.5)  {6$'$};
	\node (n3') at (13,8.5) {3$'$};
	\node (n2') at (15,8.5)  {2$'$};
	\node (n1') at (14,11.5)  {1$'$};
	\node (n5') at (14,7.25) {5$'$};
	\node (n10') at (14,10) {10$'$};
	\node (n8') at (14,8.5) {8$'$};
	\node (n9') at (14,12.5) {9$'$};
	\node (n7') at (13,10) {7$'$};
	\node (n12') at (13,12.5) {12$'$};
	\node (n11') at (14,6) {11$'$};
	\node (n4') at (15,10) {4$'$};
	
	\foreach \from/\to in {n1/n11,n2/n11,n1/n12,n2/n12,n9/n1,n9/n2,n5/n1,n8/n2,n6/n2,n6/n1,n5/n2,n8/n1,n1/n4,n1/n10,n7/n1,n3/n4,n3/n10,n3/n7,n8'/n3',n8'/n2',n5'/n2',n3'/n5',n11'/n2',n3'/n11',n4'/n2',n4'/n3',n10'/n2',n10'/n3',n7'/n3',n2'/n7',n1'/n10',n1'/n4',n1'/n7',n1'/n9',n1'/n6',n1'/n12'}
	\draw (\from) -- (\to);
	
	\end{tikzpicture}
\end{center}
     \end{enumerate}

\subsection{Generalization of Construction I}

Let $n$ be a positive integer, and  $\sigma(n)$ denote the number of divisors of $n$. Let $B$ be a $p \times q$ matrix, and $k$  be a divisor of $n$. Let us  construct the block matrix $F_k$ of size $kq + \frac{n}{k} p$ as follows: First row of the block matrix $F_k$ consists of $k$ number of $B$'s such that the $B$'s are kept in $(1,2)$th, $(1,3)$th, $\dots$, $(1,k+1)$th  position of $F_k$, and the remaining $\frac{n}{k}$ blocks of $F_k$ are filled with zero matrices of appropriate order. Symmetrically fill the first column of the block matrix $F_k$. Now, if the $i$-th entry of the first column $F_k$ is the zero block, then set the $i$-th row of the matrix $F_k$ equals to the first row of $F_k$. Let us fill the remaining entries symmetrically. In this process, the matrix $B$ is used for $n$ times, and hence, by symmetry, the matrix $B^T$ is too used for $n$ times.  The matrix $F_k$ constructed as above is the following:   $$  \begin{bmatrix}
0        & B &\hdots & B & 0 &\hdots & 0\\
B^T       & 0&\hdots  & 0 & B^T &\hdots & B^T  \\
\vdots &  \vdots&  \ddots & \vdots & \vdots&  \ddots & \vdots \\
B^T      & 0&\hdots  & 0 & B^T &\hdots & B^T \\
0      & B &\hdots & B & 0 &\hdots & 0\\
\vdots &  \vdots&  \ddots & \vdots & \vdots&  \ddots & \vdots \\
0      & B &\hdots & B & 0 &\hdots & 0\\
\end{bmatrix}.$$
It is easy to see that, there are $\sigma(n)$ possibilities are there for the matrix $F_k$.
In the next theorem, we shall show that all these $\sigma(n)$ matrices have all nonzero eigenvalues  in common. 
\begin{thm} \label{num_divisors-theorem}
     Let $B$ be a $p \times q$ matrix. Consider the family of symmetric matrices $F_k$  constructed as above. Then  the matrices $F_k \oplus 0_{(n -\frac{n}{k})p + (1-k)q}$, with $k$ varying over the set of all divisors of $n$, are cospectral.
\end{thm}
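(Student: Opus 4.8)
The plan is to recognize $F_k$ as a complete bipartite block matrix and then reuse, in a $k$-dependent form, the eigenvector construction from the proof of Theorem~\ref{constr-cospec1.1}. Writing $m = n/k$, a symmetric permutation of the rows and columns brings $F_k$ to the form
\[
\widetilde{F}_k = \begin{bmatrix} 0 & J_{m\times k}\otimes B \\ J_{k\times m}\otimes B^T & 0 \end{bmatrix},
\]
where $J_{m\times k}$ is the all-ones matrix; that is, $F_k$ consists of $m = n/k$ diagonal blocks of order $p$ and $k$ diagonal blocks of order $q$, with a copy of $B$ placed in every block position joining a $p$-block to a $q$-block. In particular $F_k$ has order $mp + kq$, so $F_k \oplus 0_{(n - n/k)p + (1-k)q}$ has order $np + q$ for \emph{every} divisor $k$; this common size is what will let us pass from equality of nonzero spectra to full cospectrality.

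Next I would produce the nonzero eigenvalues explicitly. Let $\lambda$ be an eigenvalue of $\begin{bmatrix} 0 & B \\ B^T & 0\end{bmatrix}$ with eigenvector $\begin{bmatrix} x \\ y\end{bmatrix}$, so that $By = \lambda x$ and $B^T x = \lambda y$. Generalizing Observation~\ref{key-lemma} and the computation in the proof of Theorem~\ref{constr-cospec1.1}, I would form the vector equal to $\sqrt{k}\,x$ on each of the $m$ blocks of order $p$ and to $\sqrt{m}\,y$ on each of the $k$ blocks of order $q$, and verify directly that
\[
\widetilde{F}_k \begin{bmatrix} \sqrt k\,x \\ \vdots \\ \sqrt k\,x \\ \sqrt m\,y \\ \vdots \\ \sqrt m\,y \end{bmatrix} = \sqrt{mk}\,\lambda \begin{bmatrix} \sqrt k\,x \\ \vdots \\ \sqrt k\,x \\ \sqrt m\,y \\ \vdots \\ \sqrt m\,y\end{bmatrix},
\]
using $mk = n$. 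Thus $\sqrt{n}\,\lambda$ is an eigenvalue of $F_k$ for every $k$, and since these values depend only on $B$ and $n$, all the $F_k$ share the eigenvalues $\sqrt n\,\lambda$.

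To upgrade ``share'' to ``have identical nonzero spectra,'' I would pin down multiplicities from both sides. If $(x_1,y_1),\dots,(x_s,y_s)$ are linearly independent eigenvectors of $\begin{bmatrix} 0 & B \\ B^T & 0\end{bmatrix}$ for nonzero eigenvalues, then the corresponding block-constant vectors above are linearly independent (reading off a single block recovers $\sum c_i x_i = 0$ and $\sum c_i y_i = 0$), so each nonzero $\sqrt n\,\lambda$ occurs in $F_k$ with multiplicity at least that of $\lambda$. Conversely, $\operatorname{rank}(F_k) = 2\operatorname{rank}(J_{m\times k}\otimes B) = 2\operatorname{rank}(B)$, because the rank of a Kronecker product is the product of the ranks and $J_{m\times k}$ has rank one; hence $F_k$ has exactly $2\operatorname{rank}(B)$ nonzero eigenvalues for every $k$. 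Comparing this count with the nonzero eigenvalues already exhibited forces the inequalities on multiplicities to be equalities, so the nonzero spectrum of $F_k$ is exactly $\{\sqrt n\,\lambda\}$ as $\lambda$ ranges over the nonzero eigenvalues of $\begin{bmatrix} 0 & B \\ B^T & 0\end{bmatrix}$, independent of $k$.

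Combining the two points finishes the argument: every $F_k \oplus 0_{(n-n/k)p+(1-k)q}$ has the same order $np+q$ and the same nonzero spectrum, hence the same multiplicity of the eigenvalue $0$, and therefore all of them are cospectral. I expect the only delicate point to be the bookkeeping that pins down the multiplicities exactly, namely showing that the block-constant eigenvectors already exhaust the nonzero part of the spectrum; the rank identity $\operatorname{rank}(F_k) = 2\operatorname{rank}(B)$ settles this uniformly in $k$, after which the verification of the eigenvalue equation and the zero-padding count are routine.
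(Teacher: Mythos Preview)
Your proof is correct and follows essentially the same strategy as the paper: both construct the block-constant eigenvectors from eigenvectors of $\begin{bmatrix} 0 & B \\ B^T & 0\end{bmatrix}$ to exhibit the common nonzero spectrum $\{\sqrt{n}\,\lambda\}$, then pad with zeros to equalize sizes. Your Kronecker-product reformulation and the rank identity $\operatorname{rank}(F_k)=2\operatorname{rank}(J_{m\times k}\otimes B)=2\operatorname{rank}(B)$ give a tidier exact multiplicity count than the paper, which instead bounds the nullity from below via repeated block-columns and then (implicitly) matches the total dimension.
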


\begin{proof}

Let us compute the spectrum of the matrix $F_k$.  

As we have $k$ columns of with first entry as $B$ are the same, so $0$ is an eigenvalue with multiplicity at least $(k-1)q$. Similarly, we have $\frac{n}{k}$ columns of with first entry as $0$ are the same, so $0$ is an eigenvalue with multiplicity at least $(\frac{n}{k} - 1)p$.

The rest of the eigenvalues are obtained by using the eigenvectors of the matrix $\begin{bmatrix}
	0      & B \\
	B^T       & 0
	\end{bmatrix}$, which is illustrated below.
	
Let $\lambda$ be an eigenvalue of $\begin{bmatrix}
0      & B \\
B^T       & 0
\end{bmatrix}$, and $\begin{bmatrix}
x \\
y
\end{bmatrix}$ be an eigenvector corresponding to $\lambda$.
As, $B^Tx = \lambda y$ and $By = \lambda x$, we get
$$ \begin{bmatrix}
0        & B &\hdots & B & 0 &\hdots & 0\\
B^T       & 0&\hdots  & 0 & B^T &\hdots & B^T  \\
\vdots &  \vdots&  \ddots & \vdots & \vdots&  \ddots & \vdots \\
B^T      & 0&\hdots  & 0 & B^T &\hdots & B^T \\
0      & B &\hdots & B & 0 &\hdots & 0\\
\vdots &  \vdots&  \ddots & \vdots & \vdots&  \ddots & \vdots \\
0      & B &\hdots & B & 0 &\hdots & 0\\
\end{bmatrix}\begin{bmatrix}
\sqrt k x       \\
\sqrt\frac{n}{k}y        \\
\vdots\\
\sqrt\frac{n}{k}y  \\
\sqrt k x \\
\vdots\\
\sqrt k x \\
\end{bmatrix} = \begin{bmatrix}
\\k\sqrt\frac{n}{k}By       \\
\frac{n}{k}\sqrt k Bx         \\
\vdots\\
\frac{n}{k}\sqrt k Bx     \\
k\sqrt\frac{n}{k}By\\
\vdots\\
k\sqrt\frac{n}{k}By \\
\end{bmatrix}  = \lambda \sqrt n \begin{bmatrix}
\sqrt k x       \\
\sqrt\frac{n}{k}y        \\
\vdots\\
\sqrt\frac{n}{k}y  \\
\sqrt k x \\
\vdots\\
\sqrt k x \\
\end{bmatrix}.$$
Thus, the rest of the eigenvalues of $F_k$ are  $\lambda \sqrt n$,  where $\lambda$ is an eigenvalue of $\begin{bmatrix}
0      & B \\
B^T       & 0
\end{bmatrix}$.

Hence, the collection of matrices $F_k$ have all non-zero eigenvalues in common and the collection of matrices generated by $F_k \oplus 0_{(n -\frac{n}{k} )p + (1-k)q}$ with $k$ varying over divisors of $n$ are cospectral.
\end{proof}

\begin{thm} \label{NLF_K}
	The normalized Laplacian matrices corresponding to a family of graphs corresponding to  matrices $F_k \oplus 0_{(n -\frac{n}{k})p + (1-k)q}$ constructed in Theorem \ref{num_divisors-theorem} are cospectral.
\end{thm}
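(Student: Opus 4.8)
The plan is to mirror the strategy used for the adjacency matrices in Theorem \ref{num_divisors-theorem}, but now working with the harmonic eigenvector characterization of the normalized Laplacian supplied by Theorem \ref{butler-2}. The key point is that the normalized Laplacian eigenvalues of each $F_k$ split into two pieces: a large block of eigenvalues equal to $1$ coming from the zero eigenvalues of $F_k$, and the remaining eigenvalues which are forced to come from $L\bigl(\begin{bmatrix} 0 & B \\ B^T & 0 \end{bmatrix}\bigr)$ in a way independent of $k$. Matching these two pieces across all divisors $k$ of $n$ gives cospectrality.

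First I would count the degenerate eigenvalues. By the analysis in the proof of Theorem \ref{num_divisors-theorem}, the matrix $F_k$ has $0$ as an eigenvalue with multiplicity at least $(k-1)q + (\frac{n}{k}-1)p$, and after appending the block $0_{(n-\frac{n}{k})p + (1-k)q}$ the total multiplicity of $0$ becomes the $k$-independent quantity $(n-1)p$ (one verifies $(k-1)q + (\frac{n}{k}-1)p + (n-\frac{n}{k})p + (1-k)q = (n-1)p$). By part (2) of Theorem \ref{butler-2}, each $L\bigl(F_k \oplus 0_{(n-\frac{n}{k})p+(1-k)q}\bigr)$ therefore has $1$ as an eigenvalue with multiplicity at least $(n-1)p$, and by the convention $L(F_k \oplus 0_r) = L(F_k) \oplus I_r$ this count is exact for the degenerate part. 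This contributes the same block of $1$'s to every member of the family regardless of $k$.

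Next I would produce the nondegenerate eigenvalues via the harmonic eigenvector. Let $D_1, D_2$ be the diagonal matrices of row sums of $B, B^T$, so that $\begin{bmatrix} D_1 & 0 \\ 0 & D_2 \end{bmatrix}$ is the degree matrix of $\begin{bmatrix} 0 & B \\ B^T & 0 \end{bmatrix}$. The degree matrix of $F_k$ has diagonal blocks $kD_1$ in the rows whose block-row contains $k$ copies of $B$ and $\frac{n}{k}D_2$ in the rows whose block-row contains $\frac{n}{k}$ copies of $B^T$; this is exactly the scaling that appeared, in the $k=n$ and $k=1$ special cases, in the proof of Theorem \ref{NLC&D}. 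Given a harmonic eigenvector $\begin{bmatrix} x \\ y \end{bmatrix}$ of $L\bigl(\begin{bmatrix} 0 & B \\ B^T & 0 \end{bmatrix}\bigr)$ satisfying $(D_1 - B)\,$-type relations $D_1 x - By = \lambda D_1 x$ and $D_2 y - B^T x = \lambda D_2 y$, I would check that the blown-up vector with $x$ placed in each block-row of $B$-type and $y$ placed in each block-row of $B^T$-type (with no extra scalar factors needed, since row sums scale by the same multiplicity as the off-diagonal blocks) is a harmonic eigenvector of $L(F_k)$ for the same eigenvalue $\lambda$. The block equation reduces to $k(D_1 x - By) = \lambda k D_1 x$ and $\frac{n}{k}(D_2 y - B^T x) = \lambda \frac{n}{k} D_2 y$, which are just the original relations multiplied through by $k$ and $\frac{n}{k}$ respectively, hence hold automatically.

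Combining the two parts, the full spectrum of $L\bigl(F_k \oplus 0_{(n-\frac{n}{k})p+(1-k)q}\bigr)$ is the spectrum of $L\bigl(\begin{bmatrix} 0 & B \\ B^T & 0 \end{bmatrix}\bigr)$ together with $1$ repeated $(n-1)p$ times, and neither piece depends on the divisor $k$; therefore all members of the family are cospectral. The step I expect to require the most care is the verification that the blown-up vector is genuinely harmonic and that the multiplicity bookkeeping is exact rather than merely a lower bound: one must confirm that the degenerate block of $1$'s and the $\frac{n}{k}q$ (from $k$ and $\frac{n}{k}$) nondegenerate eigenvalues account for all $kq + \frac{n}{k}p$ eigenvalues of $F_k$ without overlap, so that no eigenvalue is double-counted and the dimension count closes exactly. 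Since the nondegenerate eigenvalues coincide with those of the $(p+q)$-dimensional matrix $L\bigl(\begin{bmatrix} 0 & B \\ B^T & 0 \end{bmatrix}\bigr)$, this final tally is where the argument must be pinned down.
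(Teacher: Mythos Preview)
Your proposal is correct and follows essentially the same route as the paper: both arguments use Theorem~\ref{butler-2} to convert the $(n-1)p$ zero eigenvalues of $F_k \oplus 0_{(n-\frac{n}{k})p+(1-k)q}$ into a block of $1$'s for the normalized Laplacian, and then lift a harmonic eigenvector $\begin{bmatrix} x \\ y \end{bmatrix}$ of $L\bigl(\begin{smallmatrix} 0 & B \\ B^T & 0 \end{smallmatrix}\bigr)$ to a harmonic eigenvector of $L(F_k)$ by repeating $x$ and $y$ in the appropriate block rows, with the row-sum scaling by $k$ and $\frac{n}{k}$ cancelling on both sides. Your explicit verification of the arithmetic $(k-1)q + (\frac{n}{k}-1)p + (n-\frac{n}{k})p + (1-k)q = (n-1)p$ and your closing remark about pinning down the exact multiplicity tally are in fact slightly more careful than the paper's own treatment, which leaves the ``at least'' as implicit equality.
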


\begin{proof}
	As the collection of matrices $F_k \oplus 0_{(n -\frac{n}{k})p + (1-k)q}$ have $0$ as an eigenvalue with multiplicity at least $(n-1)p$,  by Theorem \ref{butler-2}, the spectrum of the family of matrices $L(F_k \oplus 0_{(n -\frac{n}{k})p + (1-k)q})$ have  $1$ as an eigenvalue with multiplicity at least $(n-1)p$.
	
	Now, the  rest of the eigenvalues  are constructed using the eigenvectors of $L\Bigg(\begin{bmatrix}
	0      & B \\
	B^T       & 0
	\end{bmatrix}\Bigg)$.
	
	$$ \Bigg(\begin{bmatrix}
	D_{1}      & 0 \\
	0      & D_{2}
	\end{bmatrix} - \begin{bmatrix}
	0      & B \\
	B^T       & 0
	\end{bmatrix}\Bigg)\begin{bmatrix}
	x \\
	y
	\end{bmatrix} = \begin{bmatrix}
	D_{1}      & -B \\
	-B^T      & D_{2} \\
	\end{bmatrix}\begin{bmatrix}
	x \\
	y \\
	\end{bmatrix} = \begin{bmatrix}
	D_{1}x - By \\
	D_{2}y - B^Tx
	\end{bmatrix} =  \lambda \begin{bmatrix}
	D_{1}x \\
	D_{2}y
	\end{bmatrix}. $$
	
	So the rest of the eigenvalues of $L(F_k)$ are given by
	$$ \begin{bmatrix}
	kD_1        & -B &\hdots & -B & 0 &\hdots & 0\\
	-B^T       & \frac{n}{k} D_2 &\hdots  & 0 & -B^T &\hdots & -B^T  \\
	\vdots &  \vdots&  \ddots & \vdots & \vdots&  \ddots & \vdots \\
	-B^T      & 0&\hdots  & \frac{n}{k} D_2 & -B^T &\hdots & -B^T \\
	0      & -B &\hdots & -B & kD_1  &\hdots & 0\\
	\vdots &  \vdots&  \ddots & \vdots & \vdots&  \ddots & \vdots \\
	0      & -B &\hdots & -B & 0 &\hdots & kD_1 \\
	\end{bmatrix}\begin{bmatrix}
	 x       \\
	y        \\
	\vdots\\
	y  \\
	 x \\
	\vdots\\
	 x \\
	\end{bmatrix} = \begin{bmatrix}
	kD_1x - k By  \\
	\frac{n}{k} D_2y - \frac{n}{k} Bx \\
	\vdots\\
	\frac{n}{k} D_2y - \frac{n}{k} Bx \\
	k D_1x - k By \\
	\vdots\\
	k D_1x - k By  \\
	\end{bmatrix}  = \lambda \begin{bmatrix}
	 k D_1x       \\
	\frac{n}{k} D_2y        \\
	\vdots\\
	\frac{n}{k} D_2y  \\
	k D_1x \\
	\vdots\\
	k D_1x \\
	\end{bmatrix}.$$
	
Since $L(F_k \oplus 0_{(n -\frac{n}{k})p + (1-k)q}) = L(F_k) \oplus I_{(n -\frac{n}{k})p + (1-k)q}$,  thus $L(F_k \oplus 0_{(n-1)(p-q)})$ has $1$ as eigenvalue with multiplicity at least $(n-1)p$.   
	
	Thus spectrum of $L(F_k \oplus 0_{(n -\frac{n}{k})p + (1-k)q})$ is spectrum of $L\Bigg(\begin{bmatrix}
	0      & B \\
	B^T       & 0
	\end{bmatrix}\Bigg) \cup \{ \underbrace{1,\dots,1}_\text{$((n-1)p)$-times} \}$.

	Hence the family of matrices constructed by varying $k$ among the divisors of $n$, the matrices $L(F_k \oplus 0_{(n -\frac{n}{k})p + (1-k)q})$  are cospectral.	
\end{proof}
	
\begin{rem}
	In Theorem \ref{num_divisors-theorem}, if the entries of the matrix $B$ are either $0$ or $1$, $p \geq q$ and if $B$ is such that the maximum row sum of $B$ is different from the maximum column sum of $B$, then the family of graphs associated with $F_k \oplus 0_{(n -\frac{n}{k})p + (1-k)q}$ as adjacency matrices, respectively, are cospectral.  Also, by Theorem \ref{NLF_K},  the normalized Laplacian matrices corresponding to the adjacency matrices are cospectral. But these graphs not isomorphic. In the construction of matrix $F_k$ even if we permute $0$'s and $B$'s, we get the same graph with permuted labels. 
\end{rem}	
 Next, we illustrate the above construction with an example. For $p=3$, $q=2$ and $n=4$. As number of divisors of $4$ is $3$, let us generate $3$ graphs which are cospectral with respect to both the adjacency and the normalized Laplacian matrices, but non-isomorphic. 

Let $B =\begin{bmatrix}
            1    &  0 \\
            1      & 1  \\
            1    &   1  \\
        \end{bmatrix}.$      
In the graphs illustrated below, vertex labeled$\{1, 2, \dots, n\}$ corresponds to the graph of the adjacency matrix $F_1$ with $k = 1$, vertex labeled$\{1', 2', \dots, n'\}$ corresponds to the graph of the adjacency matrix $F_4 \oplus 0_3$ with $k = 4$, and vertex labeled$\{1", 2", \dots, n"\}$ corresponds to the graph of the adjacency matrix $F_2 \oplus 0_4$ with $k = 2$.

        \begin{center}
        	\begin{tikzpicture}
        	[scale=.8,auto=left,every node/.style={circle,fill=blue!20}]
        	\node (n4) at (1,7)  {4};
        	\node (n1) at (4,9) {1};
        	\node (n2) at (4,5)  {2};
        	\node (n3) at (5,10)  {3};
        	\node (n5) at (3,7) {5};
        	\node (n6) at (2,10) {6};
        	\node (n7) at (7,7) {7};
        	\node (n8) at (2,7) {8};
        	\node (n9) at (3,10) {9};
        	\node (n10) at (6,7) {10};
        	\node (n11) at (5,7) {11};
        	\node (n12) at (6,10) {12};
        	\node (n13) at (0,7) {13};
        	\node (n14) at (8,7) {14};
        	
        	\node (n4') at (11,7)  {4$'$};
        	\node (n3') at (14,9) {3$'$};
        	\node (n2') at (14,5)  {2$'$};
        	\node (n14') at (16,10)  {14$'$};
        	\node (n8') at (13,7) {8$'$};
        	\node (n6') at (15,7) {6$'$};
        	\node (n7') at (18,7) {7$'$};
        	\node (n5') at (12,7) {5$'$};
        	\node (n13') at (12,10) {13$'$};
        	\node (n10') at (17,7) {10$'$};
        	\node (n11') at (16,7) {11$'$};
        	\node (n12') at (14,7) {12$'$};
        	\node (n9') at (10,7) {9$'$};
        	\node (n1') at (14,11) {1$'$};
        	    	
        	\foreach \from/\to in {n1/n8,n1/n9,n1/n3,n1/n6,n1/n7,n1/n10,n1/n4,n1/n11,n1/n5,n2/n7,n2/n5,n2/n8,n2/n10,n2/n4,n2/n11,n1/n12,n1/n13,n1/n14,n2/n14,n2/n13,n11'/n3',n11'/n2',n5'/n2',n3'/n5',n9'/n2',n9'/n3',n7'/n3',n7'/n2',n4'/n3',n4'/n2',n8'/n2',n8'/n3',n6'/n2',n6'/n3',n10'/n2',n10'/n3',n1'/n8',n1'/n4',n1'/n6',n1'/n10'}
        	\draw (\from) -- (\to);
        	
        	\end{tikzpicture}
        \end{center}
    
\begin{center}
	\begin{tikzpicture}
	[scale=.8,auto=left,every node/.style={circle,fill=blue!20}]
	\node (n2) at (6,7)  {2$"$};
	\node (n9) at (9,8.5) {9$"$};
	\node (n4) at (9,5.5)  {4$"$};
	\node (n14) at (10.5,10)  {14$"$};
	\node (n5) at (9,10) {5$"$};
	\node (n11) at (6,10) {11$"$};
	\node (n7) at (12,7) {7$"$};
	\node (n8) at (7.5,7) {8$"$};
	\node (n13) at (7.5,10) {13$"$};
	\node (n3) at (10.5,7) {3$"$};
	\node (n10) at (9,4) {10$"$};
	\node (n12) at (12,10) {12$"$};
	\node (n1) at (4.5,7) {1$"$};
	\node (n6) at (13.5,7) {6$"$};
	
	\foreach \from/\to in {n1/n9,n1/n4,n6/n4,n6/n9,n3/n9,n3/n4,n2/n4,n2/n9,n8/n4,n8/n9,n7/n4,n7/n9,n10/n2,n10/n8,n10/n3,n10/n7,n5/n2,n5/n8,n5/n3,n5/n7}
	\draw (\from) -- (\to);
	
	\end{tikzpicture}
\end{center}
\subsection{Construction II}
\begin{thm} \label{G_construct_II}
    Let $B$ be a $p \times q$ matrix, let $G$ be a $q \times q$ symmetric matrix and $G'$ be a $p \times p$ symmetric matrix. Let $ A = \begin{bmatrix}
    G'  & B  & B\\
    B^T  & 0 & G \\
    B^T  & G & 0 \\
    \end{bmatrix}$ be a $(p+2q) \times (p+2q)$ matrix and $ C = \begin{bmatrix}
    G  & B^T  & B^T \\
    B  & 0 & G' \\
    B  & G' & 0 \\
    \end{bmatrix}$ be a $(2p+q) \times (2p+q)$ matrix.
       Then  $ A \oplus \begin{bmatrix}
    0  & G' \\
    G'  & 0 \\
    \end{bmatrix} \oplus G$ and  $ C \oplus \begin{bmatrix}
    0  & G \\
    G  & 0 \\
    \end{bmatrix} \oplus G'$ are cospectral matrices.
\end{thm}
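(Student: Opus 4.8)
The plan is to compute the spectrum of each direct summand separately by splitting the eigenvectors of $A$ and of $C$ according to Observation \ref{key-lemma}, and then to check that the leftover pieces assemble into the same multiset on both sides. First I would analyze $A$. By Observation \ref{key-lemma}, an eigenvector $\begin{bmatrix} x \\ y \\ z \end{bmatrix}$ of $A$ with eigenvalue $\lambda$ satisfies $G(z-y)=-\lambda(z-y)$, so either $y=z$ or $-\lambda\in\spec(G)$. These alternatives correspond to the two orthogonal, $A$-invariant subspaces $\{(x,y,y)\}$ and $\{(0,w,-w)\}$, of dimensions $p+q$ and $q$, which together span $\mathbb{R}^{p+2q}$. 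On the antisymmetric subspace, if $Gw=\nu w$ then $\begin{bmatrix} 0 \\ w \\ -w \end{bmatrix}$ is an eigenvector of $A$ for the eigenvalue $-\nu$, so this subspace contributes $-\spec(G)$. On the symmetric subspace I would verify directly that whenever $\begin{bmatrix} u \\ v \end{bmatrix}$ is an eigenvector for eigenvalue $\mu$ of the symmetric matrix
$$M = \begin{bmatrix} G' & \sqrt{2}\,B \\ \sqrt{2}\,B^T & G \end{bmatrix},$$
the vector $\begin{bmatrix} \sqrt{2}\,u \\ v \\ v \end{bmatrix}$ is an eigenvector of $A$ for $\mu$, the factor $\sqrt{2}$ being exactly what absorbs the two copies of $B$ in the first block row. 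Hence $\spec(A)=\spec(M)\cup(-\spec(G))$.

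Running the identical argument on $C$, where the roles of $p$ and $q$ and of $G$ and $G'$ are interchanged and $B$ is replaced by $B^T$, I would obtain $\spec(C)=\spec(M')\cup(-\spec(G'))$ with
$$M' = \begin{bmatrix} G & \sqrt{2}\,B^T \\ \sqrt{2}\,B & G' \end{bmatrix}.$$
Since $M'$ is obtained from $M$ by simultaneously permuting its two block rows and its two block columns, the two matrices are similar through a permutation matrix, so $\spec(M')=\spec(M)$. It then remains to record the spectra of the flanking summands: for any eigenvector $\phi$ of $G'$ with $G'\phi=\mu'\phi$, the vectors $\begin{bmatrix} \phi \\ \pm\phi \end{bmatrix}$ are eigenvectors of $\begin{bmatrix} 0 & G' \\ G' & 0 \end{bmatrix}$ for $\pm\mu'$, whence $\spec\begin{bmatrix} 0 & G' \\ G' & 0 \end{bmatrix}=\spec(G')\cup(-\spec(G'))$, and likewise $\spec\begin{bmatrix} 0 & G \\ G & 0 \end{bmatrix}=\spec(G)\cup(-\spec(G))$.

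Assembling everything, the left-hand side has spectrum $\spec(M)\cup(-\spec(G))\cup\spec(G')\cup(-\spec(G'))\cup\spec(G)$, while the right-hand side has $\spec(M)\cup(-\spec(G'))\cup\spec(G)\cup(-\spec(G))\cup\spec(G')$. Both equal the multiset $\spec(M)\cup\spec(G)\cup(-\spec(G))\cup\spec(G')\cup(-\spec(G'))$, which establishes cospectrality.

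I do not expect a deep obstacle here; the single point that requires care is the multiplicity bookkeeping. I must confirm that the symmetric and antisymmetric subspaces are genuinely complementary and $A$-invariant, so that no eigenvalue is lost or double-counted, and that the rescaling $\mathrm{diag}(I_p,\sqrt{2}\,I_q)$ conjugates the non-symmetric block matrix $\begin{bmatrix} G' & 2B \\ B^T & G \end{bmatrix}$, which appears naturally when imposing $y=z$, into the symmetric $M$, thereby guaranteeing real eigenvalues counted with the correct multiplicities. The genuine content of the statement is the design of the two flanking summands: they are chosen precisely so that the surplus term $-\spec(G)$ produced by $A$ and the surplus term $-\spec(G')$ produced by $C$ are symmetrized into the identical multiset on both sides.
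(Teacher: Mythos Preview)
Your proposal is correct and follows essentially the same approach as the paper: both split the eigenvectors of $A$ and $C$ into the antisymmetric part $(0,w,-w)$, which contributes $-\spec(G)$ (respectively $-\spec(G')$), and the symmetric part $(x,y,y)$, whose eigenvalues coincide for $A$ and $C$. The only cosmetic difference is that the paper matches the symmetric parts by directly transplanting eigenvectors via $(x,y,y)\mapsto(y,\tfrac{x}{2},\tfrac{x}{2})$, whereas you package this step through the auxiliary matrix $M$ and its permutation-similar companion $M'$; your explicit attention to the complementarity of the two invariant subspaces makes the multiplicity bookkeeping a bit more transparent than in the paper.
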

\begin{proof}
If $\lambda$ is an eigenvalue of the matrix $G$ with an eigenvector $y$, then 
     $$\begin{bmatrix}
    G'  & B  & B\\
    B^T  & 0 & G \\
    B^T  & G & 0 \\
    \end{bmatrix}\begin{bmatrix}
    0      \\
    y        \\
    -y \\
    \end{bmatrix} =\begin{bmatrix}
    0      \\
    -Gy        \\
    Gy \\
    \end{bmatrix}=-\lambda \begin{bmatrix}
    0      \\
    y        \\
    -y \\
    \end{bmatrix}.$$
    Thus, $-\lambda$ is an eigenvalue of the matrix  $A$, whenever $\lambda$ is an eigenvalue of $G$. 
    
If $\mu$ is an eigenvalue of the matrix $G'$ with an eigenvector $z$, then 
    $$\begin{bmatrix}
    G  & B^T  & B^T\\
    B  & 0 & G' \\
    B  & G' & 0 \\
    \end{bmatrix}\begin{bmatrix}
    0      \\
    z        \\
    -z \\
    \end{bmatrix} =\begin{bmatrix}
    0      \\
    -G'z        \\
    G'z \\
    \end{bmatrix}=-\mu \begin{bmatrix}
    0      \\
    z        \\
    -z \\
    \end{bmatrix}.$$
       Thus, $-\mu$ is an eigenvalue of the matrix  $C$, whenever $\mu$ is an eigenvalue of $G'$. 
    The rest of the eigenvalues of $A$ are given as follows:
   \begin{equation}\label{thm3.5.1}
   \begin{bmatrix}
   G'  & B  & B\\
   B^T  & 0 & G \\
   B^T  & G & 0 \\
   \end{bmatrix}\begin{bmatrix}
   x      \\
   y        \\
   y \\
   \end{bmatrix} =\begin{bmatrix}
   G'x + 2By      \\
   B^Tx + Gy        \\
   B^Tx + Gy \\
   \end{bmatrix}=\lambda \begin{bmatrix}
   x      \\
   y        \\
   y \\
   \end{bmatrix}.
   \end{equation}

    The rest of the eigenvalues of $C$ are the same as the rest of eigenvalues of  $A$. For, from equation  (\ref{thm3.5.1}), we have  $G'x + 2By = \lambda x $ and $B^Tx + Gy = \lambda y$, and hence
    $$\begin{bmatrix}
    G  & B^T  & B^T\\
    B  & 0 & G' \\
    B  & G' & 0 \\
    \end{bmatrix}\begin{bmatrix}
    y      \\
    \frac{x}{2}        \\
    \frac{x}{2}  \\
    \end{bmatrix} =\begin{bmatrix}
    Gy + B^Tx      \\
    By + G' \frac{x}{2}  \\
    By + G' \frac{x}{2}  \\
    \end{bmatrix}=\lambda \begin{bmatrix}
    y      \\
    \frac{x}{2}  \\
    \frac{x}{2}  \\
    \end{bmatrix}.$$
    Thus the rest of the eigenvalues of the both matrices $A$ and $C$ are the same.
    
It is easy to see that, the matrices $A \oplus \begin{bmatrix}
    0  & G' \\
    G'  & 0 \\
    \end{bmatrix} \oplus G$ and $C \oplus \begin{bmatrix}
    0  & G \\
    G  & 0 \\
    \end{bmatrix} \oplus G'$ are cospectral.
\end{proof}
\begin{rem}
	In Theorem \ref{G_construct_II}, if the entries of the matrix $B$ are either $0$ or $1$, $p \geq q$, if $B$ is such that the maximum row sum of $B$ is different from the maximum column sum of $B$ and if both $G$ and $G'$ are adjacency matrices of some graphs, then the graphs associated with $A \oplus \begin{bmatrix}
	0  & G' \\
	G'  & 0 \\
	\end{bmatrix} \oplus G$ and $C \oplus \begin{bmatrix}
	0  & G \\
	G  & 0 \\
	\end{bmatrix} \oplus G'$ as adjacency matrices, respectively, are cospectral. But these graphs not isomorphic.
\end{rem}

Next, we illustrate the above construction with an example. For $p = 3$ and $q = 2$, let us generate a pair of non-isomorphic but cospectral graphs with respect to adjacency matrix.
Let $B = \begin{bmatrix}
1    &  0 \\
1      & 1  \\
1    &   1  \\
\end{bmatrix}, G = \begin{bmatrix}
0    &  1 \\
1      & 0  \\
\end{bmatrix}$ and $G'=\begin{bmatrix}
0 & 1 & 1 \\
1 & 0 & 1 \\
1 & 1 & 0 \\
\end{bmatrix}.$
The corresponding normalized Laplacian matrices are clearly not cospectral as the  graphs have different number of connected components \cite[Lemma 1.7]{chung1}.
In the following example, the graph whose vertices are labeled by the index set  $\{1, 2, \dots, n\}$ is the graph whose adjacency matrix is given by $C \oplus \begin{bmatrix}
0  & G \\
G  & 0 \\
\end{bmatrix} \oplus G'$, and   the graph whose vertices are labeled by the index set $\{1', 2', \dots, n'\}$ is the graph whose adjacency matrix is given by $A \oplus \begin{bmatrix}
0  & G' \\
G'  & 0 \\
\end{bmatrix} \oplus G$.

\begin{center}
	\begin{tikzpicture}
	[scale=.8,auto=left,every node/.style={circle,fill=blue!20}]
	\node (n4) at (7,11)  {4};
	\node (n1) at (8,9) {1};
	\node (n2) at (6,9)  {2};
	\node (n3) at (5,8)  {3};
	\node (n5) at (9,8) {5};
	\node (n6) at (9,10) {6};
	\node (n7) at (7,7) {7};
	\node (n8) at (5,10) {8};
	\node (n9) at (10,10) {9};
	\node (n10) at (11,10) {10};
	\node (n11) at (11,8) {11};
	\node (n12) at (10,8) {12};
	\node (n13) at (12,8) {13};
	\node (n14) at (12,10) {14};
	\node (n15) at (13,9) {15};
	
	\node (n6') at (16,10)  {6$'$};
	\node (n1') at (17,11) {1$'$};
	\node (n2') at (17,7)  {2$'$};
	\node (n3') at (17,9)  {3$'$};
	\node (n5') at (16,8) {5$'$};
	\node (n4') at (18,10) {4$'$};
	\node (n7') at (18,8) {7$'$};
	\node (n8') at (20,11) {8$'$};
	\node (n9') at (21,8) {9$'$};
	\node (n10') at (19,8) {10$'$};
	\node (n11') at (20,7) {11$'$};
	\node (n12') at (19,10) {12$'$};
	\node (n13') at (21,10) {13$'$};
	\node (n14') at (22.25,10) {14$'$};
	\node (n15') at (22.25,8) {15$'$};
	
	\foreach \from/\to in {n1/n2,n1/n7,n1/n4,n1/n8,n1/n6,n1/n3,n1/n5,n11/n10,n12/n9,n14/n13,n15/n13,n15/n14,n2/n4,n2/n7,n2/n5,n2/n8,n6/n4,n6/n5,n4/n8,n8/n3,n5/n7,n3/n7,n11'/n9',n9'/n13',n13'/n8',n8'/n12',n12'/n10',n10'/n11',n15'/n14',n1'/n6',n1'/n4',n6'/n5',n5'/n2',n2'/n7',n7'/n4',n3'/n1',n3'/n6',n4'/n3',n3'/n7',n3'/n2',n3'/n5',n2'/n4',n2'/n6'}
	\draw (\from) -- (\to);
	
	\end{tikzpicture}
\end{center}

\subsection{Construction III}
 Let $B$ be a $p \times q$  matrix with $p \geq q$. Let \begin{equation}\label{matrix-c}
 C = \begin{bmatrix}
    0       & B & B &\hdots & B\\
    B^T       & 0& I&\hdots  & I  \\
    B^T       & I& 0&\hdots  & I  \\
    \vdots & \vdots&  \vdots&  \ddots & \vdots\\
    B^T       & I& I&\hdots  & 0  \\
    \end{bmatrix}
 \end{equation}   be an $(nq+p) \times (nq+p)$ matrix, and  \begin{equation}\label{matrix-d}
 D = \begin{bmatrix}
    0       & B^T & B^T &\hdots & B^T\\
    B       & 0& I&\hdots  & I  \\
    B       & I& 0&\hdots  & I  \\
    \vdots & \vdots&  \vdots&  \ddots & \vdots\\
    B       & I& I&\hdots  & 0  \\
    \end{bmatrix}\end{equation} be an $(np+q) \times (np+q)$ matrix.

    \begin{lem}\label{min-one-mul}
    The matrix $C$, defined in equation $(\ref{matrix-c})$, has  $-1$ as an eigenvalue with multiplicity at least $(n-1)q$, and the matrix $D$, defined in equation $(\ref{matrix-d})$, has $-1$  as an eigenvalue with multiplicity   at least $(n-1)p$.
    \end{lem}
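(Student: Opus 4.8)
The plan is to exhibit explicitly an $(n-1)q$-dimensional space of eigenvectors of $C$ for the eigenvalue $-1$, and symmetrically an $(n-1)p$-dimensional space for $D$. The key structural observation is that the lower-right $n \times n$ block array of $C$ (indexed by the last $n$ block coordinates, each of size $q$) has zero diagonal blocks and identity off-diagonal blocks; consequently, when it acts on a vector whose last $n$ blocks are $v_1, \dots, v_n \in \mathbb{R}^q$, its $i$-th output block is $\sum_{j \neq i} v_j$.

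First I would test vectors whose top block (of size $p$) is $0$ and whose lower $n$ blocks are $v_1, \dots, v_n \in \mathbb{R}^q$, subject to the single constraint $\sum_{i=1}^n v_i = 0$. Under this constraint, the top block row of $C$ returns $B \sum_i v_i = 0$, which is $-1$ times the (zero) top block; and the $i$-th lower block row returns $\sum_{j \neq i} v_j = \big(\sum_j v_j\big) - v_i = -v_i$, which is exactly $-1$ times the $i$-th block. Hence every such vector is an eigenvector of $C$ for the eigenvalue $-1$.

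Next I would count dimensions. The constraint $\sum_{i=1}^n v_i = 0$ cuts out a subspace of $(\mathbb{R}^q)^n$ of dimension $(n-1)q$, and the assignment of $(v_1, \dots, v_n)$ to the corresponding trial vector is clearly injective, so these eigenvectors span an $(n-1)q$-dimensional space. This yields the multiplicity bound for $C$. The argument for $D$ is verbatim after interchanging the roles of $p$ and $q$ and replacing $B$ by $B^T$: one tests vectors with zero top block (now of size $q$) and lower blocks $w_1, \dots, w_n \in \mathbb{R}^p$ satisfying $\sum_i w_i = 0$, obtaining an $(n-1)p$-dimensional eigenspace for $-1$.

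I do not expect a substantive obstacle, since the eigenvalue equation is forced once the correct family of trial vectors is identified. The only point needing care is the bookkeeping of block sizes — in $C$ the top-left zero block is $p \times p$ while the remaining $n$ diagonal blocks are $q \times q$, and the reverse holds in $D$ — so that the constraint $\sum v_i = 0$ is imposed in the right space and the dimension counts $(n-1)q$ and $(n-1)p$ come out correctly.
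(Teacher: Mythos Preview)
Your proposal is correct and is essentially the same argument as the paper's. The paper exhibits a specific basis of this eigenspace --- vectors with one lower block equal to $y$, the last lower block equal to $-y$, and all other blocks zero --- whereas you describe the same $(n-1)q$-dimensional space more intrinsically via the constraint $\sum_i v_i = 0$; the verification and dimension count are identical.
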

    \begin{proof}	
        For any non-zero vector $y$ of size $q$, it is easy to see that $$ \begin{bmatrix}
        0       & B & B &\hdots & B\\
        B^T       & 0& I&\hdots  & I  \\
        B^T       & I& 0&\hdots  & I  \\
        \vdots & \vdots&  \vdots&  \ddots & \vdots\\
        B^T       & I& I&\hdots  & 0  \\
        \end{bmatrix} \begin{bmatrix}
        0       \\
        y        \\
        0 \\
        \vdots\\
        -y  \\
        \end{bmatrix} = -\begin{bmatrix}
        0       \\
        y        \\
        0 \\
        \vdots\\
        -y  \\
        \end{bmatrix}.$$ By placing the vector $y$ from the $2^{nd}$ position to
        the $n^{th}$ position, we get $(n-1)$ linearly independent eigenvectors corresponding to the eigenvalue $-1$. As each $y$ is a $q\times1$ vector,  there are $(n-1)q$ linearly independent eigenvectors associated with the eigenvalue $-1$. Similarly $D$ has $(n-1)p$ linearly independent eigenvectors associated with the eigenvalue $-1$.\end{proof}
        The following lemmas are about the remaining eigenvalues of the matrices $C$ and $D$.
        \begin{lem}\label{lam-imp-nminlam}
        If $\lambda$ is an eigenvalue of $C$ then $(n -\lambda - 1 )$ is an eigenvalue of $D$.
        \end{lem}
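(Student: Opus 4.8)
The plan is to prove the statement by explicitly transporting an eigenvector of $C$ to one of $D$, mirroring the block manipulations in Observation \ref{key-lemma}. First I would write an eigenvector of $C$ for the eigenvalue $\lambda$ as $v = (x; y_1; \dots; y_n)$, where $x$ has size $p$ and each $y_i$ has size $q$. Reading off the block rows of $Cv = \lambda v$ gives $B(y_1 + \cdots + y_n) = \lambda x$ from the top block and $B^T x + \sum_{j \ne i} y_j = \lambda y_i$ from the $i$-th block. Setting $S = \sum_j y_j$, the latter rearranges to $B^T x + S = (\lambda+1) y_i$, so the generalization of Observation \ref{key-lemma} applies: as long as $\lambda \ne -1$, the right-hand side is independent of $i$, forcing $y_1 = \cdots = y_n =: y$. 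Then $S = ny$, and the two surviving relations collapse to $nBy = \lambda x$ and $B^T x = (\lambda - n + 1)y$.

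Next I would write down the candidate eigenvector of $D$ for the eigenvalue $\mu = n - \lambda - 1$. Guided by the symmetry between $C$ and $D$ (with $B$ and $B^T$, and $p$ and $q$, interchanged), I would take $w = (-ny; x; \dots; x)$, whose first block has size $q$ and whose remaining $n$ blocks each equal $x$. Feeding $w$ into the block rows of $D$ and substituting the two relations above, the top block yields $B^T(nx) = n(\lambda - n + 1)y = \mu(-ny)$, while each lower block yields $-nBy + (n-1)x = -\lambda x + (n-1)x = (n-1-\lambda)x = \mu x$. Hence $Dw = \mu w$. Since $v \ne 0$ forces $(x,y) \ne (0,0)$, the vector $w$ is nonzero, so $\mu = n-\lambda-1$ is genuinely an eigenvalue of $D$. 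This settles every eigenvalue $\lambda \ne -1$, and the verification is just the two block computations above, so no heavy calculation is involved.

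The delicate point — and the step I expect to be the real obstacle — is the eigenvalue $\lambda = -1$, where the argument forcing the blocks $y_i$ to coincide breaks down and the clean transport above is unavailable; there the naive image $n-(-1)-1 = n$ need not be produced by an eigenvector of the above shape, so the $-1$ eigenspace has to be analysed on its own using the explicit eigenvectors of Lemma \ref{min-one-mul}. The tidiest way to organise the whole argument is therefore to split the space into the ``averaged'' subspace on which all $y_i$ agree — where $C$ restricts to $\left[\begin{smallmatrix} 0 & nB \\ B^T & (n-1)I\end{smallmatrix}\right]$ and the transport $\lambda \mapsto n-1-\lambda$ is the required bijection onto the corresponding eigenvalues of the analogous reduction of $D$ — and its orthogonal complement, which contributes only the eigenvalue $-1$ and is matched against the $-1$ eigenspace of $D$ supplied by Lemma \ref{min-one-mul}.
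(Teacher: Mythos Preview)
Your approach is essentially the paper's: transport an eigenvector $(x;y;\dots;y)$ of $C$ to one of $D$ (the paper uses $(y;-x/n;\dots;-x/n)$, which is your $w$ scaled by $-1/n$). You are in fact more careful than the paper, which simply asserts the block form $(x;y;\dots;y)$ without the Observation~\ref{key-lemma} justification you supply, and which sidesteps the $\lambda=-1$ issue entirely by prefacing the lemma with the sentence ``The following lemmas are about the remaining eigenvalues of the matrices $C$ and $D$''---so the statement is tacitly restricted to the averaged subspace, exactly as you anticipate in your final paragraph.
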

        \begin{proof}
        	Let $\lambda$  be an eigenvalue of $C$. Then,
        	\begin{equation}\label{thm3.6.1}
        	\begin{bmatrix}
        	0       & B & B &\hdots & B\\
        	B^T       & 0& I&\hdots  & I  \\
        	B^T       & I& 0&\hdots  & I  \\
        	\vdots & \vdots&  \vdots&  \ddots & \vdots\\
        	B^T       & I& I&\hdots  & 0  \\
        	\end{bmatrix} \begin{bmatrix}
        	x       \\
        	y        \\
        	y \\
        	\vdots\\
        	y  \\
        	\end{bmatrix} = \lambda \begin{bmatrix}
        	x       \\
        	y        \\
        	y \\
        	\vdots\\
        	y  \\
        	\end{bmatrix}.
        	\end{equation}
            As, from equation (\ref{thm3.6.1}), we have $B^Tx + (n-1)y = \lambda y$ and $nBy = \lambda x$ and hence
            $$ \begin{bmatrix}
                0       & B^T & B^T &\hdots & B^T\\
                B       & 0& I&\hdots  & I  \\
                B       & I& 0&\hdots  & I  \\
                \vdots & \vdots&  \vdots&  \ddots & \vdots\\
                B       & I& I&\hdots  & 0  \\
            \end{bmatrix} \begin{bmatrix}
            y       \\
            -\frac{x}{n}        \\
            -\frac{x}{n} \\
            \vdots\\
            -\frac{x}{n}  \\
            \end{bmatrix} = (n-1-\lambda) \begin{bmatrix}
            y       \\
            -\frac{x}{n}        \\
            -\frac{x}{n} \\
            \vdots\\
            -\frac{x}{n}  \\
            \end{bmatrix}. $$
        \end{proof}
    
    \begin{lem}\label{lam-imp-nminlam-ext}
        If $\lambda$ is an eigenvalue of $C$ other than $0$, then $(n -\lambda - 1)$ is an eigenvalue of $C$.
    \end{lem}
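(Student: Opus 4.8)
The plan is to produce, from an eigenvector of $C$ for $\lambda$, an explicit eigenvector of $C$ for $n-1-\lambda$, mirroring the transplantation used in Lemma \ref{lam-imp-nminlam} but keeping the target matrix equal to $C$ rather than $D$. As in the proof of Lemma \ref{lam-imp-nminlam}, I would work with the \emph{balanced} eigenvectors of $C$, i.e. those of the form $\big(x,y,y,\dots,y\big)^T$ with $x$ of size $p$ and $y$ of size $q$; these carry precisely the eigenvalues not already accounted for by the antisymmetric eigenvectors of Lemma \ref{min-one-mul}. For such a vector with eigenvalue $\lambda$, equation (\ref{thm3.6.1}) gives the two relations $nBy=\lambda x$ and $B^Tx+(n-1)y=\lambda y$, the latter of which I would rewrite as $B^Tx=(\lambda-n+1)y$.

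Setting $\lambda'=n-1-\lambda$, the key point is the triviality $\lambda+\lambda'=n-1$, which yields $\lambda-n+1=-\lambda'$ and $\lambda'-(n-1)=-\lambda$. I would then propose the candidate eigenvector $\big(\lambda x,\ \lambda' y,\ \lambda' y,\dots,\lambda' y\big)^T$ and verify it directly against $C$. The top block gives $n\,B(\lambda' y)=\lambda'(nBy)=\lambda'(\lambda x)=\lambda'(\lambda x)$, and each lower block gives $B^T(\lambda x)+(n-1)(\lambda' y)=\lambda(\lambda-n+1)y+(n-1)\lambda' y=-\lambda\lambda' y+(n-1)\lambda' y=\lambda'^2 y=\lambda'(\lambda' y)$. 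Hence
\[
C\begin{bmatrix}\lambda x\\ \lambda' y\\ \vdots\\ \lambda' y\end{bmatrix}=\lambda'\begin{bmatrix}\lambda x\\ \lambda' y\\ \vdots\\ \lambda' y\end{bmatrix},
\]
so $n-1-\lambda$ is an eigenvalue of $C$ whenever the candidate vector is nonzero.

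The one step that needs care—and the main obstacle—is non-degeneracy of this candidate, precisely at the boundary $\lambda=n-1$ (where $\lambda'=0$): there $\lambda' y=0$, and one must rule out the whole vector collapsing to zero. I would argue that $\lambda\neq 0$ forces $y\neq 0$ (if $y=0$ then $nBy=\lambda x$ forces $x=0$, contradicting $\lambda\neq0$ with a nonzero eigenvector), so for $\lambda\neq n-1$ the component $\lambda' y\neq 0$ and the candidate is a genuine eigenvector. For the remaining case $\lambda=n-1$, eliminating $x$ via $x=\frac{n}{\lambda}By$ shows $B^TBy=\tfrac{\lambda(\lambda-n+1)}{n}y=0$, hence $\|By\|^2=y^TB^TBy=0$, so $By=0$ and then $x=0$; thus $B$ is rank-deficient and $\ker B^T\neq\{0\}$, which produces the eigenvalue $0=n-1-\lambda$ of $C$ through the vector $(x_0,0,\dots,0)^T$ with $B^Tx_0=0$ (cf. Observation \ref{key-lemma}). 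Assembling the generic construction with this degenerate subcase completes the proof.
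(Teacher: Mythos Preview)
Your proof is correct and follows essentially the same route as the paper: the paper's candidate eigenvector is $\bigl(x,\tfrac{n-1-\lambda}{\lambda}y,\dots,\tfrac{n-1-\lambda}{\lambda}y\bigr)^T$, which is exactly your vector $(\lambda x,\lambda' y,\dots,\lambda' y)^T$ divided by the nonzero scalar $\lambda$. Your handling of the boundary case $\lambda=n-1$ (forcing $By=0$, $x=0$, and then producing the eigenvalue $0$ via a nonzero $x_0\in\ker B^T$) is more careful than the paper's proof, which simply writes down the eigenvector identity without verifying that the displayed candidate is nonzero.
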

\begin{proof}
    Let $\lambda$  be an eigenvalue of $C$. Then, 
    	\begin{equation}\label{thm3.6.2}
        \begin{bmatrix}
            0       & B & B &\hdots & B\\
            B^T       & 0& I&\hdots  & I  \\
            B^T       & I& 0&\hdots  & I  \\
            \vdots & \vdots&  \vdots&  \ddots & \vdots\\
            B^T       & I& I&\hdots  & 0  \\
            \end{bmatrix} \begin{bmatrix}
            x       \\
            y        \\
            y \\
            \vdots\\
            y  \\
            \end{bmatrix} = \lambda \begin{bmatrix}
            x       \\
            y        \\
            y \\
            \vdots\\
            y  \\
            \end{bmatrix}.
            \end{equation}
            
             For, from equation (\ref{thm3.6.2}), we have $B^Tx + (n-1)y = \lambda y$ and $nBy = \lambda x$ and hence $$\begin{bmatrix}
    0       & B & B &\hdots & B\\
    B^T       & 0& I&\hdots  & I  \\
    B^T       & I& 0&\hdots  & I  \\
    \vdots & \vdots&  \vdots&  \ddots & \vdots\\
    B^T       & I& I&\hdots  & 0  \\
    \end{bmatrix} \begin{bmatrix}
    x       \\
    \frac{(n-1-\lambda)}{\lambda}y        \\
    \frac{(n-1-\lambda)}{\lambda}y \\
    \vdots\\
    \frac{(n-1-\lambda)}{\lambda}y  \\
    \end{bmatrix} = (n-1-\lambda) \begin{bmatrix}
    x       \\
    \frac{(n-1-\lambda)}{\lambda}y        \\
    \frac{(n-1-\lambda)}{\lambda}y \\
    \vdots\\
    \frac{(n-1-\lambda)}{\lambda}y  \\
    \end{bmatrix}.$$
\end{proof}
\begin{lem}
    If $\lambda$ is an eigenvalue of $D$ other than $n-1$, then $n -\lambda - 1$ is an eigenvalue of $D$.
\end{lem}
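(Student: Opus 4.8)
The plan is to follow the pattern of Lemma \ref{lam-imp-nminlam-ext}, producing the new eigenvalue by an explicit rescaling of an eigenvector of all-equal form. First I would record the block equations that an eigenvector of $D$ of the shape $\begin{bmatrix} x & y & \cdots & y\end{bmatrix}^T$ (with $x$ of size $q$ and each of the $n$ remaining blocks equal to the $p$-vector $y$) satisfies for an eigenvalue $\lambda$: reading off the first block row and any one of the lower block rows of $Dv=\lambda v$ gives
\begin{equation*}
nB^{T}y = \lambda x, \qquad Bx + (n-1)y = \lambda y .
\end{equation*}
As in the setup preceding Lemma \ref{lam-imp-nminlam}, these all-equal vectors are exactly the ones accounting for the eigenvalues of $D$ other than the high-multiplicity eigenvalue $-1$ (whose eigenspace consists of vectors whose lower blocks sum to zero), so it suffices to work with them.

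The key step is the choice of scaling. In contrast to Lemma \ref{lam-imp-nminlam-ext}, where the lower blocks were rescaled, here I would keep the lower blocks $y$ fixed and rescale only the first block, setting
\begin{equation*}
w = \begin{bmatrix} \dfrac{\lambda}{\,n-1-\lambda\,}\,x \\ y \\ \vdots \\ y \end{bmatrix}.
\end{equation*}
This is well defined precisely because $\lambda \neq n-1$, which is exactly the hypothesis. Substituting $w$ into $D$ and using $Bx = (\lambda-n+1)y = -(n-1-\lambda)y$ from the second block equation, the lower block rows return $-\lambda y + (n-1)y = (n-1-\lambda)y$, while the first block row returns $nB^{T}y = \lambda x = (n-1-\lambda)\cdot\frac{\lambda}{n-1-\lambda}x$. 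Hence $Dw = (n-1-\lambda)w$, so $n-1-\lambda$ is an eigenvalue of $D$.

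The step I expect to be the main obstacle is confirming that $w$ is a genuine (nonzero) eigenvector, which can fail only in the degenerate situation $\lambda = 0$: then the scalar $\frac{\lambda}{n-1-\lambda}$ vanishes, and since a $\lambda=0$ all-equal eigenvector forces $y=0$ (because $y\in\ker B^{T}$ while $Bx=-(n-1)y\in\operatorname{range}B$ are orthogonal, so $y=0$ for $n\geq 2$), the vector $w$ collapses to zero. To close this case I would argue directly that $n-1$ is an eigenvalue of $D$ whenever $0$ is: $0\in\spec(D)$ forces $\ker B\neq\{0\}$, i.e. $\operatorname{rank}B<q\leq p$, hence $\ker B^{T}\neq\{0\}$, and then any $0\neq y\in\ker B^{T}$ yields the all-equal eigenvector $\begin{bmatrix}0 & y & \cdots & y\end{bmatrix}^{T}$ with eigenvalue $n-1$. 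For every $\lambda\neq 0,\,n-1$ the eigenvector $(x,y)$ necessarily has $y\neq 0$ (otherwise $nB^{T}y=\lambda x$ forces $x=0$), so $w\neq 0$ and the scaling above applies verbatim, completing the argument.
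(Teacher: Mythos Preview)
Your argument is correct and follows the route the paper indicates (it simply says the proof is similar to that of Lemma~\ref{lam-imp-nminlam-ext}): start from an eigenvector of all-equal form $[x,y,\dots,y]^{T}$ and rescale to obtain one for the eigenvalue $n-1-\lambda$. Your choice to rescale the top block rather than the lower ones is the natural variant that makes the exclusion $\lambda\neq n-1$ (instead of $\lambda\neq 0$) emerge directly, and your separate treatment of the $\lambda=0$ degeneracy is more care than the paper itself supplies.
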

\begin{proof}
The proof is similar to that of Lemma \ref{lam-imp-nminlam-ext}.
\end{proof}

\begin{lem}
Let $B^T$ have full row rank. Then, the matrix $C$ has $0$ as an eigenvalue with multiplicity at least $p-q$, and the matrix $D$ has $n-1$ as an   eigenvalue with multiplicity at least $p-q$.
\end{lem}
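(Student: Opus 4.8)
The plan is to produce the required eigenvectors explicitly from the kernel of $B^T$. Since $B^T$ is a $q \times p$ matrix, the hypothesis that $B^T$ has full row rank is equivalent to $\operatorname{rank}(B^T) = q$, and hence to $\dim \ker(B^T) = p - q$; this $(p-q)$-dimensional subspace of $\mathbb{R}^p$ is precisely where both claimed multiplicities will come from. I would treat $C$ and $D$ separately, in each case writing a block vector whose blocks match the block partition of the matrix (for $C$, a head of size $p$ followed by $n$ blocks of size $q$; for $D$, a head of size $q$ followed by $n$ blocks of size $p$), plugging it into the block product, and reading off the resulting conditions.

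For $C$ and the eigenvalue $0$, I would take the ansatz with head $x \in \ker(B^T)$ and all $n$ tail blocks equal to $0$. Multiplying out, the first block component is $B(0 + \cdots + 0) = 0$, and each subsequent block component is $B^T x + (\text{sum of the other tail blocks}) = B^T x = 0$ because $x \in \ker(B^T)$. Thus every such vector lies in the kernel of $C$, and since these vectors are parametrized linearly and injectively by $x \in \ker(B^T)$, they span a space of dimension $\dim \ker(B^T) = p - q$, giving $0$ as an eigenvalue with multiplicity at least $p - q$.

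For $D$ and the eigenvalue $n-1$, the correct ansatz is a zero head together with all $n$ tail blocks equal to a common vector $z \in \ker(B^T)$. The first block component is then $B^T(nz) = n B^T z = 0$, which matches $(n-1)\cdot 0$, while each of the remaining block components equals $B\cdot 0 + (nz - z) = (n-1)z$, matching $(n-1)z$. Hence each such vector is an eigenvector for the eigenvalue $n-1$, and again these are linearly independent and indexed by $z \in \ker(B^T)$, yielding multiplicity at least $p - q$.

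I expect no serious obstacle here; the only real content is choosing the correct shape of the trial eigenvector in each case — in particular recognizing that for $D$ the eigenvalue $n-1$ is produced by the constant-tail, zero-head vectors (rather than by the alternating vectors used for the eigenvalue $-1$ in Lemma \ref{min-one-mul}) — and then checking that the off-diagonal identity blocks contribute exactly $(n-1)z$. A minor point worth recording is that the full-row-rank hypothesis is used only to pin the dimension of $\ker(B^T)$ to $p - q$; without it the same vectors still work and give multiplicity $p - \operatorname{rank}(B^T)$.
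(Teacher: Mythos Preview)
Your proof is correct and follows essentially the same approach as the paper. For $C$ you use exactly the same vectors $(x,0,\dots,0)^T$ with $x\in\ker(B^T)$; for $D$ the paper obtains the eigenvectors by invoking Lemma~\ref{lam-imp-nminlam} (which sends the $C$-eigenvector $(x,0,\dots,0)^T$ for $\lambda=0$ to the $D$-eigenvector $(0,-x/n,\dots,-x/n)^T$ for $n-1$), whereas you write down these same vectors $(0,z,\dots,z)^T$ directly and verify the eigenvalue equation --- a cosmetic difference only.
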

\begin{proof}
Since $B^T$ has full row rank, so the dimension of the null space of $B^T$ is $p-q$. Now,  for a vector $x$ in the null space of $B^T$
    $$\begin{bmatrix}
0       & B & B &\hdots & B\\
B^T       & 0& I&\hdots  & I  \\
B^T       & I& 0&\hdots  & I  \\
\vdots & \vdots&  \vdots&  \ddots & \vdots\\
B^T       & I& I&\hdots  & 0  \\
\end{bmatrix} \begin{bmatrix}
x       \\
0        \\
0 \\
\vdots\\
0  \\
\end{bmatrix} = 0. $$ Thus $C$ has $0$ as an eigenvalue with multiplicity at least $(p-q)$, and hence, by Lemma \ref{lam-imp-nminlam}, $D$ has $(n-1)$ as an eigenvalue with multiplicity at least $(p-q)$.
\end{proof}

Now, let us compute the rest of the eigenvalues of $C$. Consider
$$\begin{bmatrix}
0       & B & B &\hdots & B\\
B^T       & 0& I&\hdots  & I  \\
B^T       & I& 0&\hdots  & I  \\
\vdots & \vdots&  \vdots&  \ddots & \vdots\\
B^T       & I& I&\hdots  & 0  \\
\end{bmatrix} \begin{bmatrix}
x       \\
y        \\
y \\
\vdots\\
y  \\
\end{bmatrix} = \lambda \begin{bmatrix}
x       \\
y        \\
y \\
\vdots\\
y  \\
\end{bmatrix}. $$ Then  $B^Tx + (n-1)y = \lambda y$ and $nBy = \lambda x.$ It is easy to see that, the eigenvalue equations of the $2 \times 2$ block matrix  $\begin{bmatrix}
    0  & nB    \\
    B^T & (n-1)I  \\
\end{bmatrix}$  are $B^Tx + (n-1)y = \lambda y$ and $nBy = \lambda x.$

By applying the Schur complement formula with respect to $(1,1)$-th block, we get $$\det\Big(\begin{bmatrix}
\lambda I  & -nB    \\
-B^T & \lambda I - (n-1)I  \\
\end{bmatrix}\Big) = \lambda^{(p-q)} \det(\lambda( \lambda - n +1)I - nB^TB ).$$
Thus giving the rest of the eigenvalues of the matrix $C$. (As we assumed that $\lambda$ is not zero, and since $B$ has full row rank, $B^TB$ is a full rank matrix thus it doesn't have zero eigenvalues, justifying the usage of Schur complement formula.)

Now for the  matrix $D$ similar analysis can be done, and it is easy to show that rest of the eigenvalues of the matrix $D$ corresponds to the block matrix $\begin{bmatrix}
0  & nB^T   \\
B & (n-1)I  \\
\end{bmatrix}.$ 

By the Schur complement formula with respect to $(2,2)$-th block, we get $$\det\Big(\begin{bmatrix}
\lambda I  & -nB^T    \\
-B & \lambda I - (n-1)I  \\
\end{bmatrix}\Big) = (n-1-\lambda)^{(p-q)} \det(\lambda( \lambda - n +1)I - nB^TB ).$$
This gives the rest of the eigenvalues of the matrix $D$.
   \begin{thm}\label{Construct_III}
       The matrices $D \oplus 0_{(p-q)}$ and $  C  \oplus  \underbrace{  (J-I)_n\oplus \cdots \,\oplus (J-I)_n}_\text{$(p-q)$-times} $ are cospectral.
    \end{thm}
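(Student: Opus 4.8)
The plan is to compare the spectra of the two sides by accounting for all eigenvalues with multiplicity, using the lemmas already established in this subsection. The key realization is that every eigenvalue statement I need has already been proven piecemeal in Lemmas 3.4 through 3.7 and in the two Schur-complement computations immediately preceding the theorem; the job is to assemble them into a bijection of spectra.

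First I would tabulate the spectrum of $D \oplus 0_{(p-q)}$. From Lemma \ref{min-one-mul}, $D$ has $-1$ with multiplicity at least $(n-1)p$. From the last lemma, $D$ has $n-1$ with multiplicity at least $p-q$. The remaining eigenvalues of $D$ come from $\det(\lambda(\lambda-n+1)I - nB^TB)=0$, i.e. the $2q$ roots of the quadratics $\lambda(\lambda-n+1)=n\mu_i$, where $\mu_1,\dots,\mu_q$ are the eigenvalues of $B^TB$. Finally the direct summand $0_{(p-q)}$ contributes the eigenvalue $0$ with multiplicity $p-q$. So I would write $\operatorname{spec}(D\oplus 0_{(p-q)})$ as the union of: $-1$ (mult.\ $(n-1)p$), $n-1$ (mult.\ $p-q$), $0$ (mult.\ $p-q$), and the $2q$ quadratic roots.

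Next I would tabulate the spectrum of $C \oplus (J-I)_n^{\oplus(p-q)}$. For $C$: Lemma \ref{min-one-mul} gives $-1$ with multiplicity $(n-1)q$; the full-rank lemma gives $0$ with multiplicity $p-q$; and the Schur computation shows the factor $\lambda^{p-q}$ together with $\det(\lambda(\lambda-n+1)I-nB^TB)$, so $C$ contributes the same $2q$ quadratic roots as $D$, plus $0$ again with multiplicity $p-q$ from the $\lambda^{p-q}$ factor. Each copy of $(J-I)_n$ (the $n\times n$ matrix with $0$ diagonal and $1$ off-diagonal) has eigenvalues $n-1$ (once) and $-1$ (with multiplicity $n-1$); with $p-q$ copies this supplies $n-1$ with multiplicity $p-q$ and $-1$ with multiplicity $(n-1)(p-q)$.

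Then I would match the two tallies. The quadratic roots agree immediately. For the eigenvalue $-1$: the left side has $(n-1)p$, while the right side has $(n-1)q$ from $C$ plus $(n-1)(p-q)$ from the $(J-I)_n$ blocks, totalling $(n-1)p$ — these match. For $n-1$: the left has $p-q$, the right has $p-q$ from the $(J-I)_n$ blocks — these match. For $0$: the left has $p-q$ from the zero summand, the right has $p-q$ from the $\lambda^{p-q}$ Schur factor in $C$ — these match. Since the orders also agree ($(np+q)+(p-q)=np+p = (nq+p)+(p-q)n+\dots$, which I would verify equals $(nq+p)+n(p-q)$), the multiplicities exhaust both spectra and the matrices are cospectral. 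The main obstacle I anticipate is bookkeeping rather than conceptual: I must make sure no eigenvalue is double-counted — in particular that the ``at least'' multiplicities in the lemmas are in fact exact, which follows by a dimension count since the listed eigenvalues already sum to the full order of each matrix, forcing equality throughout.
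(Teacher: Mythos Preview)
Your proposal is correct and is exactly what the paper does: its proof is the single line ``Follows from the previous lemmas,'' and you have simply written out the eigenvalue bookkeeping that this line encodes, matching the $-1$, $0$, $n-1$, and quadratic-root contributions on each side via the Schur-complement factorizations. The only cosmetic point is that your mention of ``$0$ again'' from the Schur factor and from the full-rank lemma refers to the same $p-q$ eigenvalues (as your final tally correctly reflects), and your dimension count indeed forces all the ``at least'' multiplicities to be exact.
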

\begin{proof}
Follows from the previous lemmas.
\end{proof}

\subsection{Generalization of Construction III}

    Next we provide an alternate proof of Theorem \ref{Construct_III} without full row rank assumption. . The advantage of the previous construction is an insight into the eigenvalues and the structure of the eigenvectors, which is not transparent in this construction. Let $\mathbb{C}(\lambda)$ denote the field of rational functions.

    By Lemma \ref{min-one-mul}, the matrix $C$ has $-1$ as an eigenvalue with multiplicity at least $(n-1)q$, and the matrix $D$ has $-1$ as an eigenvalue with multiplicity at least $(n-1)p$. The rest of the  eigenvalues of the matrix $C$ are given by
    $$\begin{bmatrix}
    0       & B & B &\hdots & B\\
    B^T       & 0& I&\hdots  & I  \\
    B^T       & I& 0&\hdots  & I  \\
    \vdots & \vdots&  \vdots&  \ddots & \vdots\\
    B^T       & I& I&\hdots  & 0  \\
    \end{bmatrix} \begin{bmatrix}
    x       \\
    y        \\
    y \\
    \vdots\\
    y  \\
    \end{bmatrix} =  \begin{bmatrix}
    nBy       \\
    B^Tx + (n-1)y        \\
    B^Tx + (n-1)y \\
    \vdots\\
    B^Tx + (n-1)y  \\
    \end{bmatrix} = \lambda \begin{bmatrix}
    x       \\
    y        \\
    y \\
    \vdots\\
    y  \\
    \end{bmatrix}.$$ The eigenvalue equations of the above matrix lead to that of the $2 \times 2$ block matrix $\begin{bmatrix}
    0  & nB    \\
    B^T & (n-1)I  \\
    \end{bmatrix}$. By the Schur complement formula (in $\mathbb{C}[\lambda]$) with respect to the $(1,1)$-th block, we obtain $$\det\Big(\begin{bmatrix}
    \lambda I  & -nB    \\
    -B^T & \lambda I - (n-1)I  \\
    \end{bmatrix}\Big) = \lambda^{p} \det((\lambda - n +1)I - nB^T(\lambda I)^{-1}B ) = \lambda^{p-q} \det(\lambda(\lambda - n +1)I - nB^TB).$$
As $p \geq q$, we get a polynomial at the end of the above computation, which is the indeed characteristic polynomial of the given matrix.
    Thus giving the rest of the eigenvalues of the matrix $C$. 

    Now, by the same argument, the rest of the eigenvalues of $D$ correspond to the $2 \times 2$ block matrix $\begin{bmatrix}
    0  & nB^T   \\
    B & (n-1)I  \\
    \end{bmatrix}$. By the  Schur complement formula (in $\mathbb{C}(\lambda)$)  with respect to $(2,2)$-th block, we get $$\det\Big(\begin{bmatrix}
    \lambda I  & -nB^T    \\
    -B & \lambda I - (n-1)I  \\
    \end{bmatrix}\Big) = (n-\lambda-1)^{p} \det(\lambda I - nB^T(\lambda I - (n-1)I)^{-1}B ) = (n-\lambda-1)^{(p-q)} \det(\lambda( \lambda - n +1)I - nB^TB ).$$
   As $p \geq q$, we get a polynomial at the end of the above computation, which is the characteristic polynomial of given matrix.
    This gives the rest of the eigenvalues of the matrix $D$.
    
It is easy to see that the matrices $D \oplus 0_{(p-q)}$  and $C \oplus \underbrace{(J-I)_n\oplus \cdots \,\oplus (J-I)_n}_\text{$(p-q)$-times}$ are cospectral.
    
    \begin{rem}
    	In Theorem \ref{Construct_III}, if the entries of the matrix $B$ are either $0$ or $1$, $p \geq q$ and choose $B$ such that the maximum row sum of $B$ is different from the maximum column sum of $B$, then the graphs associated with  $D \oplus 0_{(p-q)}$ and $C \oplus \underbrace{(J-I)_n\oplus \cdots \,\oplus(J-I)_n}_\text{$(p-q)$-times}$ as adjacency matrices, respectively, are cospectral. But these graphs not isomorphic.
    \end{rem}

Next, we illustrate the above construction with two examples. 
\begin{enumerate}
 \item For $p = 3$, $q = 2$ and $n = 2$, let us generate a pair of non-isomorphic but cospectral graphs with respect to the adjacency matrix, but not with respect to the normalized Laplacian matrix.
	Let $B = \begin{bmatrix}
	1    &  1 \\
	1      & 1 \\
	1      & 1 \\
	\end{bmatrix}.$	In the following example, the graph whose vertices are labeled by the index set  $\{1', 2', \dots, n'\}$ is the graph whose adjacency matrix is given by $D \oplus 0_1$, and the graph whose vertices are labeled by the index set $\{1, 2, \dots, n\}$ is the graph whose adjacency matrix is given by $  C  \oplus (J-I)_2$. But the spectrum of matrix $L(D \oplus 0_1)$ is $\{0, 0.6667, 0.6667, 1, 1, 1.3333, 1.3333, 1.3333, 1.6667\}$, and the spectrum of matrix $L(C \oplus (J-I)_2)$ is	$\{0, 0, 0.75, 1, 1, 1.25, 1.25, 1.75, 2 \}.$	
	
	\begin{center}
		\begin{tikzpicture}
		[scale=.8,auto=left,every node/.style={circle,fill=blue!20}]
		\node (n5) at (7,9) {5};
		\node (n4) at (7,5)  {4};
		\node (n3) at (7,7)  {3};
		\node (n7) at (8,8) {7};
		\node (n6) at (6,6) {6};
		\node (n1) at (6,8)  {1};
		\node (n2) at (8,6) {2};
		\node (n8) at (9,8) {8};
		\node (n9) at (9,6) {9};
		\node (n3') at (12,9)  {3$'$};
		\node (n1') at (14,7) {1$'$};
		\node (n2') at (14,5)  {2$'$};
		\node (n4') at (12,5)  {4$'$};
		\node (n5') at (16,7) {5$'$};
		\node (n6') at (16,9) {6$'$};
		\node (n9') at (14,8) {9$'$};
		\node (n8') at (16,5) {8$'$};
		\node (n7') at (12,7) {7$'$};
		
		\foreach \from/\to in {n8/n9,n5/n1,n1/n6,n6/n4,n4/n2,n2/n7,n7/n5,n3/n7,n3/n6,n5/n3,n3/n4,n1/n4,n5/n2,n1'/n2',n2'/n4',n4'/n7',n7'/n1',n1'/n6',n6'/n3',n3'/n2',n1'/n5',n5'/n8',n8'/n2',n6'/n2',n3'/n1',n1'/n4',n2'/n5',n2'/n7'}
		\draw (\from) -- (\to);
		
		\end{tikzpicture}
	\end{center}	
	\item For $p = 3$, $q = 3$ and $n = 2$, let us generate a pair of non-isomorphic but cospectral graphs with respect to the adjacency matrix but not the normalized Laplacian matrix. 
	Let $B=\begin{bmatrix}
	1    &  0  & 1\\
	1      & 1 & 0 \\
	1    &   1 & 0 \\
	\end{bmatrix}.$ In the graphs illustrated below, vertex labeled by the $\{1', 2', \dots, n'\}$ corresponds to graph of adjacency matrix $D$ and vertex labeled by the $\{1, 2, \dots, n\}$ corresponds to graph of adjacency matrix $C$. The spectrum of the matrix $L(D)$ is $\{0, 0.2034, 0.6738, 1, 1.25, 1.3333, 1.3478, 1.5, 1.6917 \}$, and the spectrum of the matrix $L(C)$ is $\{0, 0.2324, 0.6667, 1, 1.3333, 1.3333,	1.3333, 1.4343,	1.6667\}$.
	
	\begin{center}
		\begin{tikzpicture}
		[scale=.8,auto=left,every node/.style={circle,fill=blue!20}]
		\node (n3) at (9,9)  {3$'$};
		\node (n1) at (9,7) {1$'$};
		\node (n2) at (9,5)  {2$'$};
		\node (n4) at (10,8)  {4$'$};
		\node (n5) at (11,7) {5$'$};
		\node (n9) at (7,5) {9$'$};
		\node (n6) at (7,7) {6$'$};
		\node (n8) at (11,5) {8$'$};
		\node (n7) at (8,8) {7$'$};
		\node (n6') at (5,6) {6};
		\node (n4') at (3,6)  {4};
		\node (n3') at (1.5,8)  {3};
		\node (n7') at (3,8) {7};
		\node (n5') at (0,6) {5};
		\node (n8') at (0,8)  {8};
		\node (n2') at (1.5,6) {2};
		\node (n1') at (4,7) {1};
		\node (n9') at (5,8) {9};
		
		\foreach \from/\to in {n1/n4,n4/n3,n3/n7,n7/n1,n4/n7,n1/n6,n6/n9,n9/n2,n2/n8,n8/n5,n5/n1,n1/n9,n1/n8,n2/n6,n2/n5,n5'/n8',n8'/n3',n3'/n7',n7'/n4',n4'/n2',n2'/n5',n5'/n3',n8'/n2',n3'/n4',n2'/n7',n1'/n9',n1'/n4',n1'/n7',n1'/n6',n9'/n6'}
		\draw (\from) -- (\to);
		
		\end{tikzpicture}
	\end{center} 
\end{enumerate}

    \subsection{Construction IV}
    Let $B$ be a $p \times q$ matrix, $G$ be a $q \times q$ symmetric matrix, and $G'$ be a $p \times p$ symmetric matrix. 
    Let $ A = \begin{bmatrix}
    G'  & B  & B\\
    B^T  & 0 & G \\
    B^T  & G & 0 \\
    \end{bmatrix}$ be a $(p + 2q) \times (p + 2q)$ matrix, and let $ C = \begin{bmatrix}
    G'  & B  & B \\
    B^T  & G & 0 \\
    B^T  & 0 & G \\
    \end{bmatrix}$ be a $(p + 2q) \times (p + 2q)$ matrix. 
    \begin{thm} \label{G_construct_IV}
    The matrices $A \oplus \begin{bmatrix}
    G  & 0 \\
    0  & G \\
    \end{bmatrix}$  and $C \oplus \begin{bmatrix}
    0  & G \\
    G  & 0 \\
    \end{bmatrix}$ are cospectral.
    \end{thm}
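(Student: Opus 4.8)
The plan is to diagonalize $A$ and $C$ block-by-block using the symmetry that interchanges the second and third block coordinates, and then to check that the two extra direct summands exactly balance the resulting multiset of eigenvalues. Both $A$ and $C$ commute with the involution $S:(x,y,z)\mapsto(x,z,y)$, so each decomposes over the symmetric subspace $\{(x,y,y)\}$ (of dimension $p+q$) and the antisymmetric subspace $\{(0,y,-y)\}$ (of dimension $q$); since these subspaces are complementary and invariant, the eigenvalues gathered from the two pieces account for the full spectrum with correct multiplicities.

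First I would treat the antisymmetric part, which is where the whole mechanism of the theorem lives. Exactly as in the computation displayed in Observation \ref{key-lemma}, for an eigenvector $y$ of $G$ with $Gy=\lambda y$ one has $A(0,y,-y)^T=(0,-\lambda y,\lambda y)^T=-\lambda(0,y,-y)^T$, whereas the diagonal placement of $G$ in $C$ gives $C(0,y,-y)^T=(0,\lambda y,-\lambda y)^T=+\lambda(0,y,-y)^T$. Thus on the antisymmetric subspace $A$ acts as $-G$ while $C$ acts as $+G$: this single sign flip, produced by swapping the off-diagonal and diagonal positions of $G$ in the lower-right block, is the crux of the statement. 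Consequently $\spec(A)=\spec(R)\sqcup(-\spec(G))$ and $\spec(C)=\spec(R)\sqcup\spec(G)$, where $R$ is the operator obtained from the symmetric part below and $-\spec(G)$ denotes the negatives of the eigenvalues of $G$.

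Next I would treat the symmetric part, which is the routine step. On vectors $(x,y,y)$ both $A$ and $C$ produce the identical pair of equations $G'x+2By=\lambda x$ and $B^Tx+Gy=\lambda y$, so both restrict to one and the same operator on the symmetric subspace, represented in the $(x,y)$-coordinates by $R=\begin{bmatrix}G' & 2B\\ B^T & G\end{bmatrix}$. Because $A$ and $C$ are symmetric this restriction is self-adjoint, so it contributes the same $(p+q)$ real eigenvalues $\spec(R)$ to both $\spec(A)$ and $\spec(C)$; here the off-diagonal/diagonal distinction between the two matrices vanishes precisely because setting $z=y$ makes the two lower blocks act identically.

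Finally I would assemble the pieces by computing the spectra of the two extra summands. A direct eigenvector computation gives $\spec\!\bigl(\begin{bmatrix}G&0\\0&G\end{bmatrix}\bigr)=\spec(G)\sqcup\spec(G)$, and, via the eigenvectors $(y,\pm y)$, $\spec\!\bigl(\begin{bmatrix}0&G\\G&0\end{bmatrix}\bigr)=\spec(G)\sqcup(-\spec(G))$. Adjoining these to $\spec(A)$ and $\spec(C)$ respectively, the left-hand matrix has spectrum $\spec(R)\sqcup(-\spec(G))\sqcup\spec(G)\sqcup\spec(G)$ and the right-hand matrix has spectrum $\spec(R)\sqcup\spec(G)\sqcup\spec(G)\sqcup(-\spec(G))$, and these two multisets coincide. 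I do not expect a genuine obstacle: the only thing to watch is the bookkeeping of $\pm\spec(G)$, and the extra summands are engineered so that the lone $-\spec(G)$ contributed by $A$ together with two copies of $\spec(G)$ matches the lone $\spec(G)$ contributed by $C$ together with one copy each of $\pm\spec(G)$.
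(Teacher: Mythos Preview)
Your proposal is correct and follows essentially the same approach as the paper: both arguments split the spectrum of $A$ and $C$ via the antisymmetric vectors $(0,y,-y)$ (where the sign flip $-G$ versus $+G$ occurs) and the symmetric vectors $(x,y,y)$ (where both matrices yield the identical system $G'x+2By=\lambda x$, $B^Tx+Gy=\lambda y$), and then balance the discrepancy with the extra direct summands. Your framing via the involution $S$ and the explicit multiset bookkeeping at the end is a bit more careful than the paper's ``it is easy to see,'' but the underlying idea is the same.
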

    \begin{proof}
    	If $\lambda$ is an eigenvalue of the matrix $G$ with an eigenvector $y$, then 
    	$$\begin{bmatrix}
    	G'  & B  & B\\
    	B^T  & 0 & G \\
    	B^T  & G & 0 \\
    	\end{bmatrix}\begin{bmatrix}
    	0      \\
    	y        \\
    	-y \\
    	\end{bmatrix} =\begin{bmatrix}
    	0      \\
    	-Gy        \\
    	Gy \\
    	\end{bmatrix}=-\lambda \begin{bmatrix}
    	0      \\
    	y        \\
    	-y \\
    	\end{bmatrix}.$$
    	Thus, $-\lambda$ is an eigenvalue of the matrix  $A$, whenever $\lambda$ is an eigenvalue of $G$. 
    	
    	If $\mu$ is an eigenvalue of the matrix $G$ with an eigenvector $z$, then 
    	$$\begin{bmatrix}
    	G'  & B  & B\\
    	B^T  & G & 0 \\
    	B^T  & 0 & G \\
    	\end{bmatrix}\begin{bmatrix}
    	0      \\
    	z        \\
    	-z \\
    	\end{bmatrix} =\begin{bmatrix}
    	0      \\
    	Gz        \\
    	-Gz \\
    	\end{bmatrix}=\mu \begin{bmatrix}
    	0      \\
    	z        \\
    	-z \\
    	\end{bmatrix}.$$
    	Thus, $\mu$ is an eigenvalue of the matrix  $C$, whenever $\mu$ is an eigenvalue of $G$. 
    	The rest of the eigenvalues of $A$ are given as follows:
    	\begin{equation}\label{thm3.7.1}
    	\begin{bmatrix}
    	G'  & B  & B\\
    	B^T  & 0 & G \\
    	B^T  & G & 0 \\
    	\end{bmatrix}\begin{bmatrix}
    	x      \\
    	y        \\
    	y \\
    	\end{bmatrix} =\begin{bmatrix}
    	G'x + 2By      \\
    	B^Tx + Gy        \\
    	B^Tx + Gy \\
    	\end{bmatrix}=\lambda \begin{bmatrix}
    	x      \\
    	y        \\
    	y \\
    	\end{bmatrix}.
    	\end{equation}
The rest of the eigenvalues of $C$ are the same as the rest of the eigenvalues of  $A$. For, from equation  (\ref{thm3.7.1}), we have  $G'x + 2By = \lambda x $ and $B^Tx + Gy = \lambda y$, and hence
    	$$\begin{bmatrix}
    	G'  & B  & B\\
    	B^T  & G & 0 \\
    	B^T  & 0 & G \\
    	\end{bmatrix}\begin{bmatrix}
    	x      \\
    	y       \\
    	y  \\
    	\end{bmatrix} =\begin{bmatrix}
    	G'x + 2By      \\
        B^Tx + Gy  \\
    	B^Tx + Gy  \\
    	\end{bmatrix}=\lambda \begin{bmatrix}
    	x      \\
    	y  \\
    	y  \\
    	\end{bmatrix}.$$
    	Thus the rest of the eigenvalues of the both matrices $A$ and $C$ are same.
    	
    	It is easy to see that, the matrices $A \oplus \begin{bmatrix}
    	G  & 0 \\
    	0  & G \\
    	\end{bmatrix}$  and $C \oplus \begin{bmatrix}
    	0  & G \\
    	G  & 0 \\
    	\end{bmatrix} $ are cospectral.
        \end{proof}

          \subsection{Generalization of Construction IV}
         Let $B$ be a $p \times q$ matrix, $G$ be a $q \times q$ symmetric matrix, and $G'$ be a $p \times p$ symmetric matrix. 
         Let $ A = \begin{bmatrix}
         G'  & B  & B\\
         B^T  & 0 & G \\
         B^T  & G & 0 \\
         \end{bmatrix}$ be a $(p + 2q) \times (p + 2q)$ matrix.
         Let $E$ and $F$ be $q \times q$ symmetric matrices with the property $E + F = G$.  Define $D = \begin{bmatrix}
          G'  & B  & B\\
          B^T  & E & F \\
          B^T  & F & E \\
          \end{bmatrix}$ be a $(p + 2q) \times (p + 2q)$ matrix. 
        \begin{thm} \label{G_construct_IV_g}
            The matrices $A \oplus \begin{bmatrix}
            E  & F \\
            F  & E \\
            \end{bmatrix}$  and $D \oplus \begin{bmatrix}
            0  & G \\
            G  & 0 \\
            \end{bmatrix}$ are cospectral.
        \end{thm}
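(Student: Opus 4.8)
The plan is to follow the strategy of the proof of Theorem \ref{G_construct_IV}, exploiting the symmetry of $A$ and $D$ under interchange of their second and third block rows and columns. The essential observation is that because $E + F = G$, the matrices $A$ and $D$ act identically on the ``symmetric'' part of the space and differ only on the ``antisymmetric'' part, while the two appended $2\times 2$ block summands are precisely engineered to interchange these antisymmetric contributions.

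First I would record the two families of eigenvectors. On vectors of the form $(x,y,y)^T$ both $A$ and $D$ act identically: using $E+F=G$, one checks that
\[
D\begin{bmatrix} x \\ y \\ y \end{bmatrix} = \begin{bmatrix} G'x + 2By \\ B^Tx + (E+F)y \\ B^Tx + (E+F)y \end{bmatrix} = \begin{bmatrix} G'x + 2By \\ B^Tx + Gy \\ B^Tx + Gy \end{bmatrix} = A\begin{bmatrix} x \\ y \\ y \end{bmatrix},
\]
so $A$ and $D$ contribute exactly the same eigenvalues from this $(p+q)$-dimensional subspace. On vectors of the form $(0,w,-w)^T$ one computes $A(0,w,-w)^T = (0,-Gw,Gw)^T$, whereas $D(0,w,-w)^T = (0,(E-F)w,-(E-F)w)^T$; hence $A$ contributes the eigenvalues of $-G$ and $D$ the eigenvalues of $E-F$ from this $q$-dimensional subspace.

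To make the multiplicity bookkeeping airtight I would introduce the orthogonal matrix $P = I_p \oplus \tfrac{1}{\sqrt2}\begin{bmatrix} I_q & I_q \\ I_q & -I_q \end{bmatrix}$, which simultaneously block-diagonalizes both matrices:
\[
P^TAP = M \oplus (-G), \qquad P^TDP = M \oplus (E-F), \qquad \text{where } M = \begin{bmatrix} G' & \sqrt2\,B \\ \sqrt2\,B^T & G \end{bmatrix}.
\]
This confirms that the symmetric/antisymmetric split is exhaustive and shows at once that $\spec(A)=\spec(M)\cup\spec(-G)$ and $\spec(D)=\spec(M)\cup\spec(E-F)$, counted with multiplicity. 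Applying the same change of basis to the appended blocks gives $\begin{bmatrix} E & F \\ F & E \end{bmatrix} \sim (E+F)\oplus(E-F) = G\oplus(E-F)$ and $\begin{bmatrix} 0 & G \\ G & 0 \end{bmatrix}\sim G\oplus(-G)$.

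Finally I would assemble the spectra. The direct sum $A \oplus \begin{bmatrix} E & F \\ F & E \end{bmatrix}$ has spectrum $\spec(M)\cup\spec(-G)\cup\spec(G)\cup\spec(E-F)$, while $D \oplus \begin{bmatrix} 0 & G \\ G & 0 \end{bmatrix}$ has spectrum $\spec(M)\cup\spec(E-F)\cup\spec(G)\cup\spec(-G)$; these multisets coincide, which proves cospectrality. The only real obstacle is ensuring that the eigenvalue counts are correct \emph{with multiplicity} and that no eigenvalue is missed, and the explicit change of basis $P$ resolves this cleanly; the remainder is routine verification of the four block computations.
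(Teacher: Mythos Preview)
Your proof is correct and follows essentially the same symmetric/antisymmetric decomposition as the paper, observing that $A$ and $D$ agree on vectors of the form $(x,y,y)^T$ and contribute respectively the spectra of $-G$ and $E-F$ on vectors $(0,w,-w)^T$. Your use of the explicit orthogonal conjugation by $P = I_p \oplus \tfrac{1}{\sqrt2}\begin{bmatrix} I_q & I_q \\ I_q & -I_q \end{bmatrix}$ to block-diagonalize all four matrices is a tidier way to handle the multiplicity bookkeeping than the paper's more informal ``rest of the eigenvalues'' argument, but the underlying idea is identical.
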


    \begin{proof}
    	If $\lambda$ is an eigenvalue of the matrix $G$ with an eigenvector $y$, then 
    	$$\begin{bmatrix}
    	G'  & B  & B\\
    	B^T  & 0 & G \\
    	B^T  & G & 0 \\
    	\end{bmatrix}\begin{bmatrix}
    	0      \\
    	y        \\
    	-y \\
    	\end{bmatrix} =\begin{bmatrix}
    	0      \\
    	-Gy        \\
    	Gy \\
    	\end{bmatrix}=-\lambda \begin{bmatrix}
    	0      \\
    	y        \\
    	-y \\
    	\end{bmatrix}.$$
    	Thus, $-\lambda$ is an eigenvalue of the matrix  $A$, whenever $\lambda$ is an eigenvalue of $G$. 
    	
    	If $\mu$ is an eigenvalue of the matrix $E-F$ with an eigenvector $z$, then 
    	$$\begin{bmatrix}
    	G'  & B  & B\\
    	B^T  & E & F \\
    	B^T  & F & E \\
    	\end{bmatrix}\begin{bmatrix}
    	0      \\
    	z        \\
    	-z \\
    	\end{bmatrix} =\begin{bmatrix}
    	0      \\
    	(E-F)z        \\
    	-(E-F)z \\
    	\end{bmatrix}=\mu \begin{bmatrix}
    	0      \\
    	z        \\
    	-z \\
    	\end{bmatrix}.$$
    	Thus, $\mu$ is an eigenvalue of the matrix  $D$, whenever $\mu$ is an eigenvalue of $E-F$. 
    	The rest of the eigenvalues of $A$ are given as follows:
    	\begin{equation} \label{thm3.8.1}
    	\begin{bmatrix}
    	G'  & B  & B\\
    	B^T  & 0 & G \\
    	B^T  & G & 0 \\
    	\end{bmatrix}\begin{bmatrix}
    	x      \\
    	y        \\
    	y \\
    	\end{bmatrix} =\begin{bmatrix}
    	G'x + 2By      \\
    	B^Tx + Gy        \\
    	B^Tx + Gy \\
    	\end{bmatrix}=\lambda \begin{bmatrix}
    	x      \\
    	y        \\
    	y \\
    	\end{bmatrix}.
    	\end{equation}
    	
    	The rest of the eigenvalues of $D$ are the same as the rest of  the eigenvalues of  $A$. For, from equation  (\ref{thm3.8.1}), we have  $G'x + 2By = \lambda x $ and $B^Tx + Gy = \lambda y$, and hence
    	$$\begin{bmatrix}
    	G'  & B  & B\\
    	B^T  & E & F \\
    	B^T  & F & E \\
    	\end{bmatrix}\begin{bmatrix}
    	x      \\
    	y        \\
    	y \\
    	\end{bmatrix} =\begin{bmatrix}
    	G'x + 2By      \\
    	B^Tx + (E+F)y        \\
    	B^Tx + (E+F)y \\
    	\end{bmatrix}=\begin{bmatrix}
    	G'x + 2By      \\
    	B^Tx + Gy        \\
    	B^Tx + Gy \\
    	\end{bmatrix}= \lambda \begin{bmatrix}
    	x      \\
    	y        \\
    	y \\
    	\end{bmatrix}.$$
    	Thus rest of the eigenvalues of the both matrices $A$ and $D$ are the same.
    	
    	It is easy to see that, the matrices $A \oplus \begin{bmatrix}
    	E  & F \\
    	F  & E \\
    	\end{bmatrix}$  and $D \oplus \begin{bmatrix}
    	0  & G \\
    	G  & 0 \\
    	\end{bmatrix} $ are cospectral.
    \end{proof}

    \begin{rem}
    	In Theorem \ref{G_construct_IV}, if the entries of the matrix $B$ are either $0$ or $1$, if $B$ is such that the maximum row sum of $B$ is different from the maximum column sum of $B$ and if both $G$ and $G'$ are adjacency matrices of some simple undirected graphs, then the graphs associated with $A \oplus \begin{bmatrix}
    	G  & 0 \\
    	0  & G \\
    	\end{bmatrix}$  and $C \oplus \begin{bmatrix}
    	0  & G \\
    	G  & 0 \\
    	\end{bmatrix}$ as adjacency matrices, respectively, are cospectral. But these graphs not isomorphic.
    	
    	In Theorem \ref{G_construct_IV_g}, if the entries of the matrix $B$ are either $0$ or $1$, if $B$ is such that maximum row sum of $B$ is different from maximum column sum of $B$ and if both $G$ and $G'$ are adjacency matrices of some simple undirected graphs, and if both $E$ and $F$ have entries as either $0$ or $1$, along with $E$ having zero diagonal entries, then graphs associated with $A \oplus \begin{bmatrix}
    	E  & F \\
    	F  & E \\
    	\end{bmatrix}$  and $D \oplus \begin{bmatrix}
    	0  & G \\
    	G  & 0 \\
    	\end{bmatrix}$ as adjacency matrices, respectively, are cospectral. But these graphs not isomorphic.
    \end{rem}

Next, we illustrate the above construction with an example. For $p = 2$, $q = 3$, let us generate a pair of non-isomorphic but cospectral graphs with respect to adjacency matrix.
Let $ B = \begin{bmatrix}
1    &  1 & 1\\
1      & 1 & 1 \\
\end{bmatrix}$ , $G' = \begin{bmatrix}
0    &  1 \\
1      & 0  \\
\end{bmatrix}$ and $ G = \begin{bmatrix}
0 & 1 & 1 \\
1 & 0 & 1 \\
1 & 1 & 0 \\
\end{bmatrix}.$ The corresponding normalized Laplacian matrices are  not cospectral as the graphs have different number of connected components. In the graphs illustrated below, vertices labeled by the $\{1, 2, \dots, n\}$ corresponds to graph whose  adjacency matrix is $C \oplus \begin{bmatrix}
0  & G \\
G  & 0 \\
\end{bmatrix}$, and vertices labeled by the $\{ 1', 2', \dots, n'\}$  corresponds to graph whose adjacency matrix is $A \oplus \begin{bmatrix}
G  & 0 \\
0  & G \\
\end{bmatrix}$ .
        
\begin{center}
	\begin{tikzpicture}
	[scale=.8,auto=left,every node/.style={circle,fill=blue!20}]
	\node (n8) at (6,8)  {8};
	\node (n6) at (7,9) {6};
	\node (n1) at (8,7)  {1};
	\node (n2) at (6,7)  {2};
	\node (n3) at (6,6) {3};
	\node (n7) at (8,8) {7};
	\node (n5) at (8,6) {5};
	\node (n9) at (10,9) {9};
	\node (n10) at (11,6) {10};
	\node (n11) at (9,6) {11};
	\node (n12) at (10,5) {12};
	\node (n13) at (9,8) {13};
	\node (n14) at (11,8) {14};
	\node (n4) at (7,5) {4};
	\node (n3') at (13,8)  {3$'$};
	\node (n7') at (15,9) {7$'$};
	\node (n4') at (15,5)  {4$'$};
	\node (n1') at (14,7)  {1$'$};
	\node (n8') at (13,6) {8$'$};
	\node (n5') at (17,8) {5$'$};
	\node (n6') at (17,6) {6$'$};
	\node (n14') at (19,9) {14$'$};
	\node (n9') at (20,6) {9$'$};
	\node (n10') at (18,6) {10$'$};
	\node (n11') at (19,5) {11$'$};
	\node (n12') at (18,8) {12$'$};
	\node (n13') at (20,8) {13$'$};
	\node (n2') at (16,7) {2$'$};
	
	\foreach \from/\to in {n9/n14,n14/n10,n10/n12,n12/n11,n11/n13,n13/n9,n2/n1,n2/n8,n8/n6,n7/n6,n7/n1,n2/n3,n3/n4,n4/n5,n5/n1,n3/n5,n3/n1,n1/n4,n2/n4,n2/n5,n1/n8,n1/n6,n8/n7,n2/n6,n2/n7,n14'/n12',n14'/n13',n13'/n12',n9'/n11',n11'/n10',n10'/n9',n7'/n3',n3'/n8',n8'/n4',n4'/n6',n6'/n5',n5'/n7',n1'/n2',n1'/n7',n1'/n5',n1'/n8',n1'/n4',n1'/n6',n1'/n3',n2'/n7',n2'/n5',n2'/n8',n2'/n4',n2'/n6',n2'/n3'}
	\draw (\from) -- (\to);
	
	\end{tikzpicture}
\end{center}

\textbf{Acknowledgment:}  M. Rajesh Kannan would like to thank the Department of Science and Technology, India, for financial support through the projects MATRICS (MTR/2018/000986) and Early Career Research Award (ECR/2017/000643) .
\bibliographystyle{plain}
\bibliography{raj-shiv}

\begin{thebibliography}{10}

\bibitem{ben-gre}
Adi Ben-Israel and Thomas N.~E. Greville.
\newblock {\em Generalized inverses}, volume~15 of {\em CMS Books in
  Mathematics/Ouvrages de Math\'{e}matiques de la SMC}.
\newblock Springer-Verlag, New York, second edition, 2003.
\newblock Theory and applications.

\bibitem{brou-haem}
Andries~E. Brouwer and Willem~H. Haemers.
\newblock {\em Spectra of graphs}.
\newblock Universitext. Springer, New York, 2012.

\bibitem{but-lama}
Steve Butler.
\newblock A note about cospectral graphs for the adjacency and normalized
  {L}aplacian matrices.
\newblock {\em Linear Multilinear Algebra}, 58(3-4):387--390, 2010.

\bibitem{but-jas}
Steve Butler and Jason Grout.
\newblock A construction of cospectral graphs for the normalized {L}aplacian.
\newblock {\em Electron. J. Combin.}, 18(1):Paper 231, 20, 2011.

\bibitem{chung1}
Fan R.~K. Chung.
\newblock {\em Spectral graph theory}, volume~92 of {\em CBMS Regional
  Conference Series in Mathematics}.
\newblock Published for the Conference Board of the Mathematical Sciences,
  Washington, DC; by the American Mathematical Society, Providence, RI, 1997.

\bibitem{cvec}
Drago\v{s}~M. Cvetkovi\'{c}, Michael Doob, and Horst Sachs.
\newblock {\em Spectra of graphs}, volume~87 of {\em Pure and Applied
  Mathematics}.
\newblock Academic Press, Inc. [Harcourt Brace Jovanovich, Publishers], New
  York-London, 1980.
\newblock Theory and application.

\bibitem{god-mcka}
C.~D. Godsil and B.~D. McKay.
\newblock Constructing cospectral graphs.
\newblock {\em Aequationes Math.}, 25(2-3):257--268, 1982.

\bibitem{ham-spen}
Willem~H. Haemers and Edward Spence.
\newblock Enumeration of cospectral graphs.
\newblock {\em European J. Combin.}, 25(2):199--211, 2004.

\bibitem{schw}
Allen~J. Schwenk.
\newblock Almost all trees are cospectral.
\newblock In {\em New directions in the theory of graphs ({P}roc. {T}hird {A}nn
  {A}rbor {C}onf., {U}niv. {M}ichigan, {A}nn {A}rbor, {M}ich., 1971)}, pages
  275--307, 1973.

\bibitem{vandam-haemers}
Edwin~R. van Dam and Willem~H. Haemers.
\newblock Which graphs are determined by their spectrum?
\newblock volume 373, pages 241--272. 2003.
\newblock Special issue on the Combinatorial Matrix Theory Conference (Pohang,
  2002).

\bibitem{devel-vandam-haemers}
Edwin~R. van Dam and Willem~H. Haemers.
\newblock Developments on spectral characterizations of graphs.
\newblock {\em Discrete Math.}, 309(3):576--586, 2009.

\end{thebibliography}
\end{document}